\newtheorem{theorem}{Theorem}[section]
\newtheorem{prop}[theorem]{Proposition}
\newtheorem{lemma}[theorem]{Lemma}
\newtheorem{cor}[theorem]{Corollary}
\newtheorem{conj}[theorem]{Conjecture}
\newtheorem{defi}[theorem]{Definition}
\newtheorem{fact}[theorem]{Fact}
\let\oldmarginpar\marginpar
\renewcommand\marginpar[1]{\-\oldmarginpar[\raggedleft\footnotesize #1]%
{\raggedright\footnotesize #1}}
\newcommand\Osq{\hspace{1mm}\square\hspace{1mm}}
\newcommand{\diam}{{\rm diam}}
\def\vertex(#1){\put(#1){\bowtiele*{2}}}
\def\vertexo(#1){\put(#1){\bowtiele{2}}}
\def\vert(#1){\put(#1){\bowtiele*{1.5}}}
\def\verto(#1){\put(#1){\bowtiele{1.5}}}
\def\lab(#1)#2{\put(#1){\makebox(0,0)[c]{#2}}}
\begin{document}


\title{Pebbling on Graph Products and other Binary Graph Constructions}

\author{
John Asplund\\
{\small Department of Technology and Mathematics,}\\
{Dalton State College,} \\
{\small Dalton, GA 30720, USA} \\
{\small jasplund@daltonstate.edu}\\
\\
Glenn Hurlbert
\thanks{Research partially supported by Simons Foundation Grant \#246436.}\\
{\small Department of Mathematics and Applied Mathematics,}\\
{Virginia Commonwealth University,}\\
{\small Richmond, VA 23284-2014, USA} \\
{\small ghurlbert@vcu.edu}\\
\\
Franklin Kenter\\
{\small Department of Mathematics,}\\
{United States Naval Academy,}\\
{\small 
Annapolis, MD 21402, USA} \\
{\small kenter@usna.edu}\\
 }

\date{}
\maketitle

~\\

\begin{abstract}
Pebbling on graphs is a two-player game which involves repeatedly moving a pebble from one vertex to another by removing another pebble from the first vertex. 
The pebbling number $\pi(G)$ is the least number of pebbles required so that, regardless of the initial configuration of pebbles, a pebble can reach any vertex.
Graham conjectured that the pebbling number for the cartesian product, $G \Osq H$, is bounded above by $\pi(G) \pi(H)$. 
We show that $\pi(G \Osq H) \le 2\pi(G) \pi(H)$ and, more sharply, that $\pi(G \Osq H) \le (\pi(G)+|G|) \pi(H)$. 
Furthermore, we provide similar results for other graph products and graph operations. 
\end{abstract}



\section{Introduction}

{\it Pebbling on graphs} is a two-player game on a connected graph, first mentioned by Lagarias and Saks and made popular in the literature by Chung \cite{chung1989pebbling}. 
The game starts when Player 1 chooses a {\it root} vertex $r$ and places pebbles on the vertices.
Then Player 2 may make a {\it pebbling move} by choosing two pebbles at the same vertex, moving one of them to an adjacent vertex, and removing the other. 
Player 2 wins if he or she can get a pebble to $r$ after any non-negative number of pebbling moves.
The {\it pebbling number} of the graph, denoted $\pi(G)$, is the fewest number of pebbles such that, regardless of the initial configuration and root, Player 2 can win. 
A good reference for pebbling results, variations, and applications can be found in \cite{hurlHand}.


This study focuses on providing upper bounds for the pebbling numbers of various graph products and other graph constructions. 
For two graphs, $G$ and $H$, define the ``box'' (or cartesian) product $G \Osq H$ to be the graph with $V(G \Osq H) = V(G) \times V(H)$ and $E(G \Osq H) = \{ \{ (g, h) , (g', h')\} \colon g=g' \text{ and } hh' \in E(H) \text{ or } h=h' \text{ and } gg' \in E(G) \}$.  
Graham conjectured that the box product of two connected graphs obeys the following:
\begin{conj}[Graham's Conjecture {\normalfont\cite{chung1989pebbling}}]
\label{conjBox}
 \[\pi(G\Osq H)\le \pi(G)\pi(H).\]
\end{conj}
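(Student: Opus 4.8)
\section*{Proof proposal}

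The plan is to fix a root $z=(a,b)$ with $a\in V(G)$ and $b\in V(H)$, take an arbitrary configuration $D$ on $G\Osq H$ with $|D|\ge\pi(G)\pi(H)$, and exhibit pebbling moves reaching $z$. I would organize $V(G\Osq H)$ into the \emph{$H$-fibers} $F_g=\{g\}\times V(H)\cong H$ and the \emph{$G$-fibers} $F^h=V(G)\times\{h\}\cong G$, and aim for a two-phase scheme: first use the available moves (including those across $H$-edges) to concentrate pebbles onto the spine $F_a\cong H$, then solve $F_a$ for its root $(a,b)$. Since any configuration of size at least $\pi(H)$ on $F_a$ is $(a,b)$-solvable, the entire problem reduces to the \emph{delivery statement}: the moves of $G\Osq H$ can place at least $\pi(H)$ pebbles on $F_a$ whenever $|D|\ge\pi(G)\pi(H)$.

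The naive proof of the delivery statement---treating each $G$-fiber $F^h$ in isolation and invoking $\pi(G)$ to push one pebble to $(a,h)$---is too lossy: a fiber carrying fewer than $\pi(G)$ pebbles may deliver nothing, and the proportional estimate ``$m$ pebbles in a fiber yield $\lfloor m/\pi(G)\rfloor$ on the spine'' is exactly the two-pebbling-type bound that can fail, so the phases must interleave and the accounting must be global. I would therefore argue through the weight-function (strategy) duality for pebbling. For the root $a$ in $G$ there is a nonnegative weight function $w_G$, arising as a combination of rooted tree strategies, with $w_G(a)=1$ and $\sum_{g} w_G(g)\,C(g)\le\pi(G,a)-1$ for every $a$-unsolvable configuration $C$ on $G$; symmetrically there is such a $w_H$ for $(H,b)$. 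The goal is to assemble from $w_G$ and $w_H$ a single strategy $W$ on $V(G\Osq H)$ whose certified threshold is $\pi(G)\pi(H)$, forcing any $z$-unsolvable $D$ to satisfy $|D|<\pi(G)\pi(H)$ and contradicting $|D|\ge\pi(G)\pi(H)$.

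The natural candidate is the product weight $W(g,h)=w_G(g)\,w_H(h)$, and the crux---indeed the reason the conjecture has resisted proof---is that a product of two valid tree strategies is in general not a valid strategy for $G\Osq H$: the ``doubling toward the root'' conditions that each factor satisfies along its own direction interact, and a mixed move that uses a $G$-edge followed by an $H$-edge produces cross terms that the product weight does not dominate. In every known special case this deficit is covered precisely by the two-pebbling property of the factors, which supplies the slack needed to absorb the cross terms. To reach the unconditional bound I expect the decisive step to be replacing the fixed product by an \emph{adaptive} strategy: given the unsolvable $D$, choose the tree strategies in each factor according to where $D$ concentrates its pebbles, and prove a cross-term cancellation lemma showing that the resulting $W$ can always be certified at the product threshold $\pi(G)\pi(H)-1$. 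Establishing that such an adaptive $W$ exists for \emph{every} $D$---equivalently, that the worst mixed-move deficit never exceeds the slack available---is the main obstacle, and it is where the bulk of the technical work, and any genuinely new idea beyond the two-pebbling property, will be required.
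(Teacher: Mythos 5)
There is a genuine gap, and in fact there has to be: the statement you are trying to prove is Graham's Conjecture itself, which the paper states as Conjecture~\ref{conjBox} and explicitly describes as open. The paper proves only the weaker multiplicative bounds $\pi(G\Osq H)\le(\pi(G)+|G|)\pi(H)\le 2\pi(G)\pi(H)$ (Proposition~\ref{boxProd} and Theorem~\ref{2graham}), precisely because the losses your proposal identifies cannot currently be avoided. Your own write-up concedes the decisive step: the ``cross-term cancellation lemma'' asserting that an adaptive product strategy $W$ can always be certified at threshold $\pi(G)\pi(H)-1$ is not proved, merely proposed as the place ``where the bulk of the technical work\dots will be required.'' A proof cannot defer its crux; as written, this is a research program, not an argument, and that deferred lemma is essentially a restatement of the conjecture in the weight-function framework.

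Two further points in the scheme are themselves unjustified. First, your starting assumption of exact duality --- that for every rooted graph $(G,a)$ there is a weight function $w_G$ with $w_G(a)=1$ and $\sum_g w_G(g)C(g)\le\pi(G,a)-1$ for \emph{every} $a$-unsolvable $C$ --- is not a theorem. The weight function lemma of \cite{hurlWeight} yields upper bounds on $\pi(G,r)$ from tree strategies, but those bounds need not be tight, so the certificates $w_G,w_H$ you want to combine may simply not exist at the thresholds $\pi(G,a)-1$ and $\pi(H,b)-1$. Second, even your ``delivery statement'' reduction is not known at the claimed threshold: delivering $\pi(H)$ pebbles to the fiber $F_a$ from an arbitrary configuration of size $\pi(G)\pi(H)$ is exactly where the paper's proof pays the extra $|G|\pi(H)$ term, to absorb the floor-function losses $\{p_i/\pi(H)\}<1$ across fibers; eliminating that slack without a two-pebbling-type hypothesis is the open difficulty, not a routine refinement. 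Your diagnosis of \emph{why} the naive fiber-by-fiber argument fails is accurate and matches the paper's discussion, but diagnosis plus a named missing lemma does not close the gap.
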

The conjecture would be tight if true \cite{chung1989pebbling}. 
It is still open and has only been solved in a handful of cases, including products of paths \cite{chung1989pebbling}, products of cycles \cite{herscovici2003graham, snevily20002}, products of trees \cite{moews}, products of fan graphs and products of wheel graphs \cite{feng2002pebbling} and products between graphs from certain families and graphs with the so-called 2-pebbling property \cite{chung1989pebbling,snevily20002,wang2009graham}. 
It has also been verified for graphs of high minimum degree \cite{czyghurl2006girth}.


We make progress toward this conjecture from a different perspective. 
Instead of focusing on select families of graphs for which we can verify the conjecture, we aim to find the best constant $\beta$ for which we can prove that $\pi(G \Osq H) \le \beta \pi (G) \pi(H)$ is universally true. 
In fact, in Section \ref{box}, we prove that for any connected graphs $G$ and $H$, \[ \pi(G \Osq H) \le 2 \pi (G) \pi(H).\]

Later, we extend this technique to derive similar bounds for other graph products and graph constructions which we will carefully define in the next sections.
Our results include bounds for the pebbling number on the strong graph product (Section \ref{strong}), the cross (or ``categorical'') product (Section \ref{cross}), and coronas (Section \ref{corona}). 
Beyond the work on Graham's conjecture on the box product for graphs, there are few published results for graph pebbling on other products.  
A variant of Graham's conjecture using the strong graph product instead of the box product was mentioned as a ``problem'' in \cite{wang2001pebbling} but no further progress on this variation is known to the authors. 
Note that there are upper bounds provided by Kim and Kim \cite{KimKim2} for the lexicographic product of $G$ and $H$. 



\section{Preliminaries}

Throughout, we will assume that all graphs are simple. 
We use the following notation. 
For a graph $G = (V,E)$, $V = V(G)$ and $E = E(G)$ are the vertex and edge sets of $G$, respectively, and $|G|=|V(G)|$.
We write $g \in G$ to say $g \in V(G)$. We use $a \sim_G b$ to denote that vertices $a, b$ are adjacent in the graph $G$; when the context for $G$ is clear, we will simply write $a \sim b$.

We will call the specific distribution of pebbles distributed on the vertices of $G$ a {\it configuration}. 
The number of pebbles in a configuration $C$ is its {\it size}, denoted $|C|$.
At times, we will consider a variant called $k$-fold pebbling. 
In this variant, Player 2 wins only if he or she can move $k$ pebbles to the root. 
The {\it k-fold pebbling number} of $G$, denoted $\pi_k(G)$, is the smallest number of pebbles such that Player 2 can win the $k$-fold variant regardless of the initial distribution. 
For a graph $G$ with a given root $r$, we say a configuration is $k$-fold $r$-solvable (or just $r$-solvable when $k =1)$ if there is a sequence of pebbling moves such that $k$ pebbles can eventually reach $r$, and we will let $\pi(G, r)$ denote the minimum number of pebbles to guarantee that $G$ is $r$-solvable. 
Therefore $\pi(G) = \max_r \pi(G,r)$.

\begin{defi}[Graphs with specific pebbling properties] Let $G$ be a connected graph. 
We define the following based on the specific properties with regard to pebbling.

\item[] {\bf Class~0 graphs.}
If $\pi(G) = |G|$, then $G$ is \emph{Class~0}.

\item[] {\bf Frugal graphs.} 
If $G$ has diameter $\diam(G)$ and, for all $k\ge 1$, $\pi_k(G) \leq \pi(G)+(k-1)2^{\diam(G)}$ then $G$ is \emph{frugal}.
\end{defi} 

For most graphs, it is unknown whether or not they are frugal. 
However, complete graphs and graphs with diameter 2 are frugal \cite{herscovici2013t}, as are 
2-paths \cite{AGH2015path}, and semi-2-trees \cite{AGH2017trees}. 
Some of our results can be slightly improved in the case of frugal graphs.

Throughout we will make use of the following pebbling number facts.

\begin{fact}
\label{trivial} 
For any connected graph $G$, the following hold.
\begin{enumerate}[(i)]
\item 
\label{LowerVerts}
$\pi(G) \ge |G|$.
\item 
\label{LowerDiam}
$\pi(G) \ge 2^{{\rm diam}(G)}$ where ${\rm diam} (G)$ is the diameter of the graph $G$.
\item 
\label{UpperExpo}
$\pi(G) \le 2^{|G|-1}$.
\end{enumerate} 
Furthermore, these bounds are sharp.
\end{fact}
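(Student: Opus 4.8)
The plan is to establish the three inequalities independently and then pin down sharpness with extremal graphs. Inequality \eqref{LowerVerts} follows from a single unsolvable configuration: place one pebble on each of the $|G|-1$ vertices other than the root $r$. Since no vertex then holds two pebbles, no pebbling move is available at all, and because $r$ starts empty it can never be reached. Hence this configuration of size $|G|-1$ is not $r$-solvable, so $\pi(G,r)\ge|G|$ and therefore $\pi(G)\ge|G|$.

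For \eqref{LowerDiam} I would use a weight (potential) argument. Write $d=\diam(G)$, fix vertices $u,v$ with $\dist(u,v)=d$, take $v$ as the root, and place $2^d-1$ pebbles on $u$. Assign to each pebble on a vertex $w$ the weight $2^{-\dist(w,v)}$, and let the weight of a configuration be the sum of its pebble weights. The key point is that no pebbling move increases the total weight: a move deletes two pebbles from some $x$ (contributing $2\cdot 2^{-\dist(x,v)}$) and creates one pebble on a neighbor $y$, and since $\dist(y,v)\ge\dist(x,v)-1$ the created weight $2^{-\dist(y,v)}$ is at most $2\cdot 2^{-\dist(x,v)}$. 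The starting weight is $(2^d-1)2^{-d}<1$, whereas a pebble on $v$ would contribute weight $2^{-\dist(v,v)}=1$; by monotonicity the weight can never reach $1$, so $v$ is unreachable and $\pi(G,v)\ge 2^d$.

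For the upper bound \eqref{UpperExpo} I would induct on $n=|G|$, the case $n=1$ being immediate. Given a configuration $C$ with $|C|\ge 2^{n-1}$ and a root $r$, choose a spanning tree of $G$ and let $\ell\ne r$ be one of its leaves, with tree-neighbor $u$; then $G-\ell$ is connected, has $n-1$ vertices, and still contains $r$. Writing $C(\ell)$ for the number of pebbles $C$ places on $\ell$, move pebbles from $\ell$ to $u$, converting the $C(\ell)$ pebbles into $\lfloor C(\ell)/2\rfloor$ pebbles on $u$ and discarding at most one leftover. The resulting configuration $C'$ lives on $G-\ell$ and has size $|C'|=|C|-\lceil C(\ell)/2\rceil\ge |C|-\lceil |C|/2\rceil=\lfloor |C|/2\rfloor\ge 2^{n-2}$, so the induction hypothesis applied to $G-\ell$ delivers a pebble to $r$.

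Finally, sharpness comes out almost for free. The complete graph $K_n$ satisfies $\pi(K_n)=n$: the $n-1$-pebble configuration above is unsolvable, while with $n$ pebbles either $r$ is already occupied or, by pigeonhole, some non-root vertex holds two pebbles and (being adjacent to $r$) can supply it; this makes \eqref{LowerVerts} tight. For the path $P_n$ the two bounds pinch: \eqref{LowerDiam} gives $\pi(P_n)\ge 2^{\diam(P_n)}=2^{n-1}$ and \eqref{UpperExpo} gives $\pi(P_n)\le 2^{|P_n|-1}=2^{n-1}$, forcing equality and so witnessing sharpness of both \eqref{LowerDiam} and \eqref{UpperExpo} at once. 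The only step I expect to require care is the bookkeeping in \eqref{UpperExpo}: one must check that halving the leaf's pebbles still leaves at least $2^{n-2}$ pebbles on $G-\ell$, which is exactly the place where the hypothesis $|C|\ge 2^{n-1}$ is spent; everything else is routine.
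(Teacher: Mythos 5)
Your proposal is correct, and parts (i) and the sharpness examples (complete graphs for (i), paths for (ii) and (iii)) coincide with the paper's; the clean observation that (ii) and (iii) pinch for $P_n$ is exactly the paper's implicit reasoning. The genuine differences are in (ii) and (iii). For (ii), the paper merely asserts that $2^{\diam(G)}-1$ pebbles at a farthest vertex form an unsolvable configuration, while you supply the justification via the standard potential function $w\mapsto 2^{-\dist(w,v)}$; this fills in a step the paper treats as evident, and the monotonicity computation is correct. For (iii), the paper is citation-based: it invokes Lemma~\ref{spanningSubgraph} together with Chung's formula $\pi(T)=\sum_{i=1}^t 2^{a_i}-t+1$ for trees, then bounds $\sum 2^{a_i}-t+1\le 2^{\sum a_i}=2^{|G|-1}$. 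You instead give a self-contained induction on $|G|$: strip a leaf $\ell\ne r$ of a spanning tree (so $G-\ell$ stays connected), push the $C(\ell)$ pebbles to the tree-neighbor, and verify $|C'|\ge|C|-\lceil|C|/2\rceil=\lfloor|C|/2\rfloor\ge 2^{n-2}$. The bookkeeping is right (monotonicity of $\lceil\cdot\rceil$ gives $\lceil C(\ell)/2\rceil\le\lceil|C|/2\rceil$, and discarding the leftover pebble on $\ell$ is harmless since you are proving an upper bound), so the induction closes. What each approach buys: the paper's argument is shorter and leans on known machinery, and the tree formula actually yields the stronger intermediate bound $\pi(G)\le\pi(T)$ with the exact value of $\pi(T)$; your leaf-stripping induction is elementary and self-contained, requiring neither the tree pebbling formula nor the spanning-subgraph monotonicity lemma, at the cost of losing that sharper tree-level information.
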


\begin{proof}
The lower bound (\ref{LowerVerts}) follows from constructing an $r$-unsolvable configuration by placing one pebble at each vertex except $r$.
The lower bound (\ref{LowerDiam}) follows from constructing an $r$-unsolvable configuration by placing $2^{{\rm diam}(G)}-1$ pebbles at a vertex of maximum distance from an appropriately chosen $r$.
The upper bound (\ref{UpperExpo}) follows from using Lemma~\ref{spanningSubgraph} (from Fact 3 of  \cite{chung1989pebbling}) along with the formula for the pebbling number of a tree (Fact 11 of \cite{chung1989pebbling}).
Indeed, for $T$ some breadth-first search spanning tree of $G$, we have $\pi(T)=\sum_{i=1}^t 2^{a_i} - t + 1$ for some $a_1\ge \cdots \ge a_t$ and some $t\ge 0$, where $\sum_{i=1}^t a_i=|G|-1$.
Then $\pi(G) \leq \pi(T) = \sum_{i=1}^t 2^{a_i} - t + 1\le 2^{\sum_{i=1}^t a_i} = 2^{|G|-1}$.

Complete graphs are sharp for (\ref{LowerVerts}), and paths are sharp for (\ref{LowerDiam}) and (\ref{UpperExpo}).
\end{proof}

Additionally, the following lemma is helpful for comparing the pebbling number of a graph against a subgraph. 
\begin{lemma}\label{spanningSubgraph}
{\normalfont\cite{chung1989pebbling}} If $G'$ is a spanning subgraph of a connected graph $G$ then $\pi(G')\geq \pi(G)$.
\end{lemma}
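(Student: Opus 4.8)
The plan is to exploit the fact that pebbling solvability is monotone under adding edges. Since $G'$ is a spanning subgraph of $G$, we have $V(G')=V(G)$ and $E(G')\subseteq E(G)$; in particular every pebbling move that is legal in $G'$ is also legal in $G$, because the edge it traverses belongs to $E(G')$ and hence to $E(G)$. The whole argument rests on this single observation, so the work is really in unwinding the definition of $\pi(\cdot,r)$ and $\pi(\cdot)$ correctly.

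First I would fix an arbitrary root $r$ and an arbitrary configuration $C$ on the common vertex set $V(G')=V(G)$, and prove the pointwise claim that if $C$ is $r$-solvable in $G'$, then $C$ is $r$-solvable in $G$. This follows by taking any sequence of pebbling moves that brings a pebble to $r$ in $G'$ and replaying exactly the same sequence in $G$: each move in the sequence traverses an edge of $G'$, which is also an edge of $G$, so the identical sequence is a valid solving sequence in $G$ as well.

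Second, I would translate this pointwise implication into an inequality on pebbling numbers for the fixed root. By the definition of $\pi(G',r)$, every configuration of size $\pi(G',r)$ is $r$-solvable in $G'$, and by the first step each such configuration is then $r$-solvable in $G$. Since every configuration of that size is $r$-solvable in $G$, we conclude $\pi(G,r)\le\pi(G',r)$. Finally, taking the maximum over all roots, and using $\pi(G)=\max_r\pi(G,r)$, the pointwise bound $\pi(G,r)\le\pi(G',r)$ yields $\pi(G)=\max_r\pi(G,r)\le\max_r\pi(G',r)=\pi(G')$, which is the claimed inequality.

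There is no serious obstacle here; the argument is a short monotonicity observation. The only point that requires care is the handling of the quantifiers over configurations: one must argue that \emph{every} configuration of the threshold size $\pi(G',r)$ is $G$-solvable, rather than merely that some solvable configuration exists, and one must keep the direction of the inequality straight. The underlying intuition is simply that deleting edges can only make it harder to route pebbles to the root, and the replay argument above is exactly what formalizes this.
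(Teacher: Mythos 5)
Your proof is correct and is exactly the standard monotonicity argument for this lemma, which the paper does not reprove but simply cites from \cite{chung1989pebbling}: every pebbling move in $G'$ replays verbatim in $G$, giving $\pi(G,r)\le\pi(G',r)$ for each root $r$, and taking maxima yields the claim. You also handle the one delicate point correctly, namely that \emph{every} configuration of size $\pi(G',r)$ is $r$-solvable in $G'$ and hence in $G$, so no further comment is needed.
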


For two graphs, $G$ and  $H$, $g \in V(G)$, and $h \in V(H)$ we will use the simplistic notation $g \times H$ to denote the set $\{g\} \times V(H)$; likewise for $G \times h \colon V(G) \times \{h\}$.

\begin{defi}[Graph Products and Constructions]

For graphs $G$ and $H$, we define the following:


\item[] {\bf Box (or Cartesian) Product.}
$G \Osq H$ is the graph with $V(G \Osq H) = V(G) \times V(H)$ and $(g, h) \sim_{G \Osq H} (g', h')  \text{ whenever } (g=g' \text{ and } h\sim_H h' ) \text{ or } (h=h' \text{ and } g\sim_G g')$. 
For example, $K_2 \Osq K_2 = C_4$.


\item[] {\bf Strong Product.}
$G \boxtimes H$ is the graph with $V(G \boxtimes H) = V(G) \times V(H)$ and $(g, h) \sim_{G \boxtimes H} (g', h')  \text{ whenever one of the following holds }$
\begin{itemize}
\item $(g=g' \text{ and } h\sim_H h' )$, 
\item $(h=h' \text{ and } g\sim_G g'), \text{ or }$ 
\item  $(g\sim_G g' \text{ and } h\sim_H h')$.
\end{itemize}
For example, $K_2 \boxtimes K_2 = K_4$.


\item[] {\bf Cross (or Categorical) Product.}
$G \times H$ is the graph with $V(G \times H) = V(G) \times V(H)$ and  $(g, h) \sim_{G \times H} (g', h')   \text{ whenever } (g \sim_G g' \text{ and } h\sim_H h' )$. 
For example, $K_2 \times K_2 = 2K_2$.

%

\item[] {\bf Corona.}
$G\bowtie H$ is the graph constructed by taking one copy of $G$ and $|G|$ disjoint copies of $H$ and then, for each vertex in $G$, adding all possible edges to one distinct copy of $H$. 
Specifically, it has vertex set  $V(G\bowtie H) = V(G) \cup (V(G) \times V(H))$ and 
\begin{itemize}
\item $g \sim_{G\bowtie H} g'$ whenever $g \sim_G g'$,  
\item $g \sim_{G\bowtie H} (g, h')$ for all $h'\in H$, and 
\item $(g,h) \sim_{G\bowtie H} (g, h') $ whenever $h \sim_H h'$.
\end{itemize} For example, $K_2 \bowtie K_2 = 2K_3$ plus an edge (See Figure~{\normalfont\ref{CoronaEx}}). 

\begin{figure}[htb]
\begin{center}
\begin{tikzpicture}[scale=1.2]
\tikzstyle{every node}=[draw,circle,fill=black,minimum size=2pt,inner sep=3pt]
\path (0.0,0.5) node (a) {};
\path (0.0,-0.5) node (b) {};
\path (1.0,0.0) node (c) {};
\path (2.0,0.0) node (d) {};
\path (3.0,0.5) node (e) {};
\path (3.0,-0.5) node (f) {};
\draw (c)
  -- (b)
  -- (a)
  -- (c)
  -- (d)
  -- (e)
  -- (f)
  -- (d)
  ;
\end{tikzpicture}
\end{center}
\caption{$K_2 \bowtie K_2$} 
\label{CoronaEx}
\end{figure}


\end{defi}


\section{Pebbling on Box Products}\label{box}

In this section we prove the following.

\begin{theorem}\label{2graham}
Let $G$ and $H$ be connected graphs. 
Then
\[ \pi(G \Osq H) \leq 2 \pi(G) \pi(H). \]
\end{theorem}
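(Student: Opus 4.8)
The plan is to reduce solving $G\Osq H$ to two one-dimensional pebbling problems: first I move pebbles within each ``column'' $g\times H$ toward the target row $G\times h_0$, then I move the accumulated pebbles along that row (a copy of $G$) to the root. Throughout, fix an arbitrary root $r=(g_0,h_0)$ and a configuration $C$ with $|C|=2\pi(G)\pi(H)$, and write $p_g=|C\cap(g\times H)|$ for the number of pebbles in column $g$, so that $\sum_{g\in G}p_g=|C|$.

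The first ingredient I would establish is a multiplicative bound on the $k$-fold pebbling number: for any connected graph $H$ and any root, a configuration of size at least $q\,\pi(H)$ can deliver $q$ pebbles to that root, i.e.\ $\pi_q(H)\le q\,\pi(H)$. This follows from the superadditivity of solvability: if I partition the physical pebbles of such a configuration into $q$ disjoint sub-configurations each of size at least $\pi(H)$, then each sub-configuration is solvable to the root (since $\pi(H)\ge\pi(H,h_0)$), and running the $q$ strategies one after another, each using only its own pebbles, delivers $q$ pebbles in total.

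Applying this inside each column, which is a copy of $H$ with root $h_0$, I can move $q_g:=\lfloor p_g/\pi(H)\rfloor$ pebbles to the vertex $(g,h_0)$ using only moves within column $g$. These column operations are pairwise non-interfering, so afterwards the row $G\times h_0$, a copy of $G$ with root $g_0$, carries a configuration of size at least $\sum_g q_g$. Using $\lfloor x\rfloor\ge x-1$ and $|G|\le\pi(G)$ from Fact~\ref{trivial}(\ref{LowerVerts}), I obtain
\[ \sum_{g\in G} q_g \;\ge\; \frac{|C|}{\pi(H)} - |G| \;=\; 2\pi(G)-|G| \;\ge\; \pi(G). \]
Since this is at least $\pi(G)\ge\pi(G,g_0)$, one further pebble can be delivered along the row to $r$, proving $r$-solvability. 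The identical computation with $|C|=(\pi(G)+|G|)\pi(H)$ again yields $\sum_g q_g\ge\pi(G)$, which is the sharper bound advertised in the introduction.

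The step I expect to require the most care is the column-to-row accounting: I must ensure the $k$-fold delivery lemma is invoked correctly (the deliverable count is controlled by the \emph{column size} alone, independently of how the pebbles are arranged within the column) and that composing the independent column strategies with the final row strategy never reuses a pebble. The multiplicative $k$-fold bound is the crux of the argument, as it is precisely what converts the factor-of-two surplus in $|C|$ into enough deliverable pebbles to absorb the $-|G|$ loss coming from the floor functions.
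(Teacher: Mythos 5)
Your proposal is correct and follows essentially the same route as the paper: the paper's Proposition~\ref{boxProd} uses the identical column-then-row decomposition, the same implicit fact that $p_g \ge k\pi(H)$ delivers $k$ pebbles within a column (which you justify explicitly via the partition argument for $\pi_q(H)\le q\,\pi(H)$), and the same floor-function accounting losing at most $|G|$, combined with Fact~\ref{trivial}(\ref{LowerVerts}) to pass from $(\pi(G)+|G|)\pi(H)$ to $2\pi(G)\pi(H)$. Your $\lfloor x\rfloor \ge x-1$ bookkeeping is just a restatement of the paper's fractional-part computation, so the two arguments coincide in substance.
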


The main idea behind the proof of this bound on the pebbling number for $G\Osq H$ can be seen in Figure~\ref{boxProduct}. 
Given a configuration of pebbles, we first move all of those pebbles over to the copy of $G$ that contains the target vertex (that is the top right vertex in Figure~\ref{boxProduct}). 
Now that all pebbles are in the copy of $G$ containing the target vertex, it remains to move all of the pebbles in this copy of $G$ to the target vertex. 
The goal is to have $\pi(G)$ pebbles in this copy of $G$ which will guarantee a pebbling solution. 

\begin{figure}[htb]
\begin{center}
\includegraphics[scale=1]{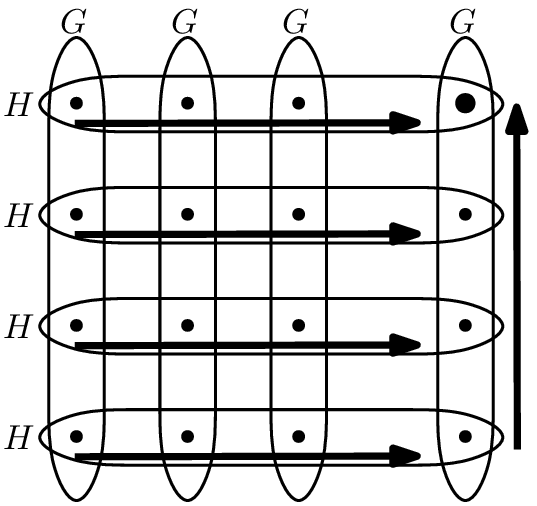}
\end{center}
\caption{$G\Osq H$ with pebble movements related to 
Proposition~\ref{boxProd}}\label{boxProduct}
\end{figure}

The following result is a proof of the idea above and is a stronger result than Theorem \ref{2graham} that follows. 

\begin{prop}\label{boxProd}
Let $G$ and $H$ both be connected graphs. 
Then,
\[ \pi(G \Osq H) \leq (\pi(G)+|G|) \pi(H).\] 
\end{prop}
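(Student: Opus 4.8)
The plan is to exploit the fiber structure of $G\Osq H$. Fix a root $r=(g_0,h_0)$ and write the given configuration $C$ as a disjoint union over the $|G|$ copies of $H$, namely the fibers $g\times H$ for $g\in V(G)$; let $n_g$ denote the number of pebbles $C$ places on $g\times H$, so that $\sum_{g} n_g = (\pi(G)+|G|)\pi(H)$. The strategy is two-phase: first, working inside each fiber $g\times H$ (an isomorphic copy of $H$ with distinguished vertex $(g,h_0)$), push as many pebbles as possible onto $(g,h_0)$; these vertices together form the ``target column'' $G\times h_0$, an isomorphic copy of $G$ containing $r$. Second, once enough pebbles have accumulated on $G\times h_0$, solve inside that copy of $G$ to deliver a pebble to $r$. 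Crucially, pebbling moves inside distinct fibers never interact, since each such move stays within a single fiber, and the second-phase moves stay within $G\times h_0$; hence the two phases can be performed in sequence without interference.

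The heart of the argument is a quantitative $k$-fold statement that I would prove first: for every connected graph $H$, every root, and every configuration of size $n$, one can move at least $\lfloor n/\pi(H)\rfloor$ pebbles to the root; equivalently, $\pi_k(H)\le k\,\pi(H)$ for all $k\ge 1$. The observation driving this is that any \emph{minimal} $r$-solvable configuration $D$ has size at most $\pi(H,r)\le\pi(H)$: if $|D|\ge \pi(H)+1$, then deleting any single pebble would leave a configuration of size at least $\pi(H)$, which is $r$-solvable by definition of $\pi(H)$, contradicting minimality. Granting this, from a configuration of size $n\ge k\pi(H)$ one repeatedly extracts a minimal solvable sub-configuration, delivers one pebble to the root and sets it aside, and discards the (at most $\pi(H)$) pebbles used; after $i<k$ rounds at least $n-i\pi(H)\ge \pi(H)$ pebbles remain, so the process succeeds $k$ times.

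Applying this lemma fiber-by-fiber, I can move at least $\lfloor n_g/\pi(H)\rfloor$ pebbles onto $(g,h_0)$ within each fiber $g\times H$ (the floor is simply $0$ for fibers with $n_g<\pi(H)$, which then contribute nothing). Summing over $g$ and using $\lfloor n_g/\pi(H)\rfloor \ge (n_g-\pi(H)+1)/\pi(H)$ gives a total of at least
\[ \sum_{g} \left\lfloor \frac{n_g}{\pi(H)} \right\rfloor \;\ge\; \sum_{g}\frac{n_g-\pi(H)+1}{\pi(H)} \;\ge\; \frac{\sum_g n_g - |G|\,\pi(H)}{\pi(H)} \;=\; \pi(G) \]
pebbles on the target column $G\times h_0$. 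Since $G\times h_0$ is a copy of $G$ with root $r$ carrying at least $\pi(G)$ pebbles, the definition of $\pi(G)$ completes the proof by moving a pebble to $r$.

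I expect the sub-lemma $\pi_k(H)\le k\,\pi(H)$ to be the main obstacle: the remainder is bookkeeping about fibers and a single floor-function estimate, whereas the $k$-fold bound requires the minimality argument above together with care that the delivered pebbles and the discarded pebbles are accounted for, so that each successive extraction still acts on a configuration of size at least $\pi(H)$. The only other point to verify is the independence of the fiber-phase and column-phase moves, which follows directly from the edge structure of $G\Osq H$.
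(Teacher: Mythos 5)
Your proof is correct and follows essentially the same route as the paper: decompose $G\Osq H$ into the fibers $g\times H$, convert every $\pi(H)$ pebbles within a fiber into one pebble on the target column $G\times h_0$, and verify by a floor-function estimate that at least $\pi(G)$ pebbles accumulate there, after which the copy of $G$ is solved. The only real difference is that you explicitly prove the sub-lemma $\pi_k(H)\le k\,\pi(H)$, which the paper asserts without proof (``Notice that\dots''); your minimal-subconfiguration argument for it is valid, though it can be shortened by simply partitioning the $k\pi(H)$ pebbles into $k$ blocks of size $\pi(H)$ and solving each block independently.
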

\begin{proof}
Let $C$ be the initial configuration of $(\pi(G)+|G|) \pi(H) $ pebbles. 
For each vertex $g \in G$ let $p_{g}$ denote the total number of pebbles on the vertices in $g \times H$.
Notice that if $p_g \ge k \pi(H)$, then $k$ pebbles can be moved to any single vertex in $g \times H$. 
Hence, if $\sum_{i \in G} \lfloor \frac{p_i}{\pi(H)} \rfloor \ge \pi(G)$ then a total of $\pi(G)$ pebbles can be moved to $G \times h$ for any vertex $h \in H$. 
From there, pebbles can be moved within $G \times h$ to any desired vertex. 
Therefore, since $C$ is arbitrary, it suffices to show that $\sum_{i \in G} \left\lfloor \frac{p_i}{\pi(H)} \right\rfloor > \pi(G)$.

We have:
\begin{eqnarray*}
(\pi(G)+|G|) \pi(H) &=& \sum_{i \in G} p_i \\
  &=&  \pi(H) \sum_{i \in G} \frac{p_i}{\pi(H)}  \\
  &=&  \pi(H) \left(\sum_{i \in G} \left\lfloor \frac{p_i}{\pi(H)} \right\rfloor  + \sum_{i \in G} \left\{ \frac{p_i}{\pi(H)} \right\}  \right) \\
&<&  \pi(H) \left( \sum_{i \in G} \left\lfloor \frac{p_i}{\pi(H)} \right\rfloor  + |G| \right)  
\end{eqnarray*}
where $\{x\}$ denotes the fractional part of $x$, and the strict inequality follows from the fact that each $\bigl\{ \frac{p_i}{\pi(H)} \bigr\} < 1$.
Therefore
 \[\pi(G) = \frac{(\pi(G)+|G|) \pi(H) }{\pi (H)} -|G|  < \sum_{i \in G} \left\lfloor \frac{p_i}{\pi(H)} \right\rfloor\ ,\]
which completes the proof.
\end{proof}

\begin{proof}[Proof of Theorem \ref{2graham}]
The result follows from applying Fact \ref{trivial}(\ref{LowerVerts}) that $|G| \le \pi(G)$ to Proposition \ref{boxProd}.
\end{proof}

By the same method, we can improve the previous bound for $\pi(G \Osq H)$ in the case for which one of the graphs is frugal.

\begin{prop}\label{tightBoxProd}
Let $G$ and $H$ both be connected graphs. 
If $H$ is frugal, then
\[
\pi(G\Osq H)\leq 2^{\diam(H)}\pi(G)+ |G|\pi(H).
\]
\end{prop}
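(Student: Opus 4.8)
The plan is to reuse the two-phase strategy of Proposition~\ref{boxProd}, but to replace its wasteful single-pebble bookkeeping (which charges $\pi(H)$ pebbles for \emph{each} pebble delivered out of a copy of $H$) with the frugal $k$-fold bound, whose \emph{marginal} cost per extra pebble is only $2^{\diam(H)}$. Fix a root $(g^*,h^*)$. As before, I would first consolidate pebbles onto the column $G\times h^*$, which induces a copy of $G$ containing the root, and then invoke the definition of $\pi(G)$: once $\pi(G)$ pebbles sit on $G\times h^*$, a pebble reaches $(g^*,h^*)$. So everything reduces to guaranteeing that at least $\pi(G)$ pebbles can be driven onto $G\times h^*$.

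For each $g\in G$ let $p_g$ be the number of pebbles on $g\times H$. Treating $g\times H$ as a copy of $H$ rooted at $(g,h^*)$ and applying frugality in the form $\pi_k(H)\le \pi(H)+(k-1)2^{\diam(H)}$, I get that whenever $p_g\ge \pi(H)+(k-1)2^{\diam(H)}$ one can move $k$ pebbles to $(g,h^*)$. Writing $D=2^{\diam(H)}$, the number of pebbles guaranteed to reach $(g,h^*)$ is therefore at least $k_g=1+\lfloor (p_g-\pi(H))/D\rfloor$ on the ``active'' columns where $p_g\ge\pi(H)$, and I set $k_g=0$ on the remaining columns. The target is to show $\sum_g k_g\ge \pi(G)$.

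The estimate is a counting argument analogous to the one in Proposition~\ref{boxProd}. Let $S=\{g: p_g\ge \pi(H)\}$ with $s=|S|$. From $\lfloor x\rfloor > x-1$ I get the clean strict bound $k_g>(p_g-\pi(H))/D$ for $g\in S$, so that $\sum_{g\in S}k_g> \tfrac1D\bigl(\sum_{g\in S}p_g - s\,\pi(H)\bigr)$. Since each inactive column holds at most $\pi(H)-1$ pebbles, $\sum_{g\in S}p_g\ge B-(|G|-s)(\pi(H)-1)$ where $B=D\pi(G)+|G|\pi(H)$ is the total budget; substituting and simplifying collapses the right-hand side to $\pi(G)+\tfrac{|G|-s}{D}\ge \pi(G)$. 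Integrality of $\sum_g k_g$ then yields $\sum_g k_g\ge\pi(G)$, completing the consolidation phase and hence the proof.

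The main obstacle is the accounting for the inactive columns. Pebbles parked in a copy of $H$ that holds fewer than $\pi(H)$ of them cannot be guaranteed to move at all, so they are lost; the delicate point is verifying that the additive $|G|\pi(H)$ term in the budget exactly absorbs this loss (one ``free'' block of nearly $\pi(H)$ pebbles per column) while the $D\pi(G)$ term pays the marginal cost of the $\pi(G)$ pebbles we must deliver. A secondary care point is ensuring the strict inequality survives the floor, so that integrality pushes the total at or above $\pi(G)$ rather than just below it; the slack term $(|G|-s)/D\ge 0$ is what makes this safe even when many columns are inactive. Note finally that, since $2^{\diam(H)}\le\pi(H)$ by Fact~\ref{trivial}(\ref{LowerDiam}), this bound never exceeds that of Proposition~\ref{boxProd}, as expected.
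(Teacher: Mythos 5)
Your proof is correct and follows essentially the same route as the paper: column-by-column consolidation onto $G\times h^*$ using the frugal bound $\pi_k(H)\le\pi(H)+(k-1)2^{\diam(H)}$, followed by the same floor-function counting against the budget $2^{\diam(H)}\pi(G)+|G|\pi(H)$ — the paper merely sums $\bigl\lfloor (p_i-\pi(H)+2^{\diam(H)})/2^{\diam(H)}\bigr\rfloor$ over \emph{all} columns (tolerating negative terms, which only weaken the lower bound) rather than truncating at the inactive ones as you do. Your active/inactive split is sound; note only that the nonemptiness of $S$, which your strict inequality implicitly needs, is automatic because the budget exceeds $|G|(\pi(H)-1)$.
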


\begin{proof}
Suppose $H$ is frugal with diameter $d$. 
Then for all $k$, $\pi_k(H) \le \pi(H)+(k-1)2^d$. 
Let $C$ be the initial configuration with $|G|\pi(H)+2^d\pi(G)$ pebbles. 
As in Proposition~\ref{boxProd}, for each vertex $g\in G$ let $p_g$ denote the total number of pebbles on the vertices in $g\times H$. 
Notice that if $p_g\geq \pi_k(H) \le \pi(H)+(k-1)2^d$ then $k$ pebbles can be moved to any single vertex in $g\times H$. 
Hence, if $\sum_{i\in G} \left\lfloor \frac{p_i-\pi(H)}{2^d}+1\right\rfloor \geq \pi(G)$, then a total of $\pi(G)$ pebbles can be moved to $G\times h$ for any vertex $h\in H$, finally allowing Player 2 to move within $G\times h$ to any desired vertex. 
Therefore, since $C$ is arbitrary, it suffices to show that $\sum_{i\in G} \left\lfloor \frac{p_i-\pi(H)}{2^d}+1\right\rfloor\geq \pi(G)$.

We have:
\begin{eqnarray*}
|C|-|G|(\pi(H)-2^d) &=& \sum_{i\in G} (p_i-\pi(H)+2^d) \\
&=& 2^d \sum_{i\in G} \left(\frac{p_i-\pi(H)+2^d}{2^d}\right) \\
&=& 2^d \left(\sum_{i\in G} \left\lfloor \frac{p_i-\pi(H)+2^d}{2^d}\right\rfloor +\sum_{i\in G} \left\{ \frac{p_i-\pi(H)+2^d}{2^d}\right\}\right) \\
&<& 2^d\left( \sum_{i\in G} \left\lfloor \frac{p_i-\pi(H)+2^d}{2^d}\right\rfloor+|G|\right).
\end{eqnarray*}
Therefore
\[
\pi(G)= \frac{|C|-|G|(\pi(H)-2^d)}{2^d}-|G| < \sum_{i\in G} \left\lfloor \frac{p_i-\pi(H)+2^d}{2^d}\right\rfloor,
\]
and the result follows.
\end{proof}

Since $2^d\leq \pi(H)$, we have $2^d\pi(G)+|G|\pi(H)\leq \pi(H)\pi(G)+|G|\pi(H)$.
Thus Proposition~\ref{tightBoxProd} is at least as strong as the bound in Proposition~\ref{boxProd} as long as  $\min(\{(\pi(G)+|G|)\pi(H), (\pi(H)+|H|)\pi(G)\})=(\pi(G)+|G|)\pi(H)$.

One consequence of Graham's conjecture concerns the pebbling number of the product $G^{\Osq k} = \underbrace{G \Osq \ldots \Osq G}_{ k \text{ factors } }$. 
If the conjecture holds, then $\pi (G^{\Osq k}) \le \pi(G)^k$. 
As shown in \cite{chung1989pebbling} this would be sharp, as the pebbling number of the hypercube $Q_k = K_2^{\Osq k}$ is $2^k$.  
In fact, using Proposition~\ref{boxProd}, we show the following.

\begin{cor} \label{almost}
For any connected graph $G$ and any positive integer $k$,
\[ \pi(G^{\Osq k})^{1/k} < \pi(G) + |G|. \]
\end{cor}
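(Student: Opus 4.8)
The plan is to obtain the corollary as a direct iteration of Proposition~\ref{boxProd}, peeling off one factor of $G$ at a time. Write $\alpha = \pi(G)$ and $n = |G|$, so that the target inequality is equivalent to the strict bound $\pi(G^{\Osq k}) < (\alpha + n)^k$, after which taking $k$-th roots finishes the argument. The key structural observation is that $G^{\Osq k} = G \Osq G^{\Osq(k-1)}$, so Proposition~\ref{boxProd} applies with first factor $G$ and second factor $H = G^{\Osq(k-1)}$. Crucially, the quantity $|G|$ appearing in the proposition is the cardinality of the \emph{first} factor only, namely $n$, so no cardinality formula for the large product $G^{\Osq(k-1)}$ is needed.

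First I would record the single-step estimate. For each $j$ in the range $1 \le j \le k-1$, Proposition~\ref{boxProd} gives
\[ \pi\bigl(G^{\Osq(k-j+1)}\bigr) \;=\; \pi\bigl(G \Osq G^{\Osq(k-j)}\bigr) \;\le\; (\alpha + n)\,\pi\bigl(G^{\Osq(k-j)}\bigr). \]
Chaining these $k-1$ inequalities together (equivalently, arguing by a one-line induction on $k$) telescopes to
\[ \pi\bigl(G^{\Osq k}\bigr) \;\le\; (\alpha + n)^{\,k-1}\,\pi(G) \;=\; (\alpha + n)^{\,k-1}\,\alpha. \]
The final step is to introduce the strict inequality: since $G$ is connected and hence nonempty, $n = |G| \ge 1$, so $\alpha < \alpha + n$, whence $(\alpha + n)^{k-1}\alpha < (\alpha + n)^{k}$. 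Combining gives $\pi(G^{\Osq k}) < (\alpha+n)^k$, and taking $k$-th roots yields $\pi(G^{\Osq k})^{1/k} < \alpha + n = \pi(G) + |G|$, as claimed.

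There is no serious obstacle here, as the statement is a clean consequence of the already-established proposition; the only point requiring care is bookkeeping about \emph{where} the strict inequality enters. The iterated application of Proposition~\ref{boxProd} is itself only a (non-strict) $\le$, so the strictness cannot come from the iteration. Instead I would make sure it is harvested exactly once, at the very end, from the elementary fact that the last surviving factor is $\pi(G)$ rather than $\pi(G)+|G|$, together with $|G|\ge 1$. This is also why the base case $k=1$ is consistent, since there $\pi(G) < \pi(G)+|G|$ holds for the same reason.
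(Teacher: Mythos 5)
Your proof is correct and takes essentially the same approach as the paper: iterating Proposition~\ref{boxProd} on the decomposition $G^{\Osq k} = G \Osq G^{\Osq(k-1)}$ (the paper phrases it as induction on $k$) and obtaining strictness from $\pi(G) < \pi(G)+|G|$. If anything, your accounting of exactly where the strict inequality enters is slightly more careful than the paper's one-line chain, which displays the strict and non-strict steps in the opposite order.
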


\begin{proof}
The result is true for $k=1$, and for $k>1$ we use induction and Proposition \ref{boxProd}, which yields
$\pi (G^{\Osq k}) 
< \pi (G^{\Osq (k-1)})  (\pi(G)+|G|)
\leq (\pi(G)+|G|)^k.$
\end{proof}



Since, for many graphs $\pi(G) \gg |G|$ (for example, paths and other high diameter graphs with Fact \ref{trivial}(\ref{LowerDiam})), the exponential bound for the growth of $\pi(G^{\Osq k})$ in Corollary \ref{almost} is close to best possible.



\section{Pebbling on Strong Products} \label{strong}

In this section, we use similar techniques to prove the following.

\begin{theorem}\label{32stronggraham}
Let $G$ and $H$ be connected graphs. 
Then,
\[ \pi(G\boxtimes H) \le \frac{3}{2} (\pi(G)+1)(\pi(H)+1). \]
\end{theorem}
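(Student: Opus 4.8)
The plan is to adapt the consolidation argument of Proposition~\ref{boxProd} while exploiting the extra ``diagonal'' adjacencies of the strong product. As a sanity check on the target constant, note first that $G\Osq H$ is a spanning subgraph of $G\boxtimes H$, so Lemma~\ref{spanningSubgraph} together with Theorem~\ref{2graham} already gives the crude bound $\pi(G\boxtimes H)\le \pi(G\Osq H)\le 2\pi(G)\pi(H)$; the content of the theorem is to push the leading constant from $2$ down to $\tfrac32$. The feature that makes this possible is the metric identity $\dist_{G\boxtimes H}\bigl((g,h),(g',h')\bigr)=\max\bigl(\dist_G(g,g'),\dist_H(h,h')\bigr)$: a single pebbling step along a diagonal edge advances \emph{both} coordinates simultaneously, whereas in $G\Osq H$ the two coordinates must be paid for separately.

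I would fix a root $r=(g_r,h_r)$, write $a=\pi(G)$ and $b=\pi(H)$, and take an arbitrary configuration $C$ of $\tfrac32(a+1)(b+1)$ pebbles, letting $p_g$ be the number of pebbles in the column $g\times H$. Exactly as in Proposition~\ref{boxProd}, each column $g\times H\cong H$ can ship $\lfloor p_g/b\rfloor$ pebbles to its target vertex $(g,h_r)$, after which it suffices to accumulate at least $a$ pebbles on the target row $G\times h_r\cong G$ and solve there. The obstruction to simply reusing the box bound is the per-column fractional waste $p_g\bmod b$, which over all columns can be as large as $|G|(b-1)$; it is precisely this waste that forces the additive $|G|\pi(H)$ overhead, and hence the constant $2$, in Proposition~\ref{boxProd}.

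The improvement I would extract from the diagonal edges is that this waste can be \emph{partially salvaged}. Because adjacent columns are joined by vertical \emph{and} diagonal edges, leftover pebbles in one column can be transferred into a neighboring column (along a spanning tree of $G$ rooted at $g_r$, say), albeit only at the two-for-one rate of an ordinary pebbling move. Processing the columns from the leaves of this tree inward and carrying each leftover toward $g_r$ should recover, up to rounding, roughly one half of the fractional waste, so that the effective overhead drops from $|G|(b-1)$ to about $\tfrac12|G|b$. Feeding this into the counting of Proposition~\ref{boxProd} and using $|G|\le a$ from Fact~\ref{trivial}(\ref{LowerVerts}) converts $\pi(G)\pi(H)+|G|\pi(H)\le 2\pi(G)\pi(H)$ into $\pi(G)\pi(H)+\tfrac12|G|\pi(H)\le\tfrac32\pi(G)\pi(H)$, and tracking the floor and ceiling corrections produced by the transfers is what I would expect to absorb into the clean form $\tfrac32(\pi(G)+1)(\pi(H)+1)$.

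The step I expect to be the main obstacle is making the salvage rigorous against an adversarial configuration. The leftover pebbles in each column may be placed at heights far from $h_r$ and scattered so as to resist consolidation, so I would need a carefully chosen invariant, maintained as the columns are processed toward $g_r$, that simultaneously tracks how many pebbles have reached the target row and how much leftover ``credit'' has been pushed into each not-yet-processed column, and that certifies the two-for-one transfers never squander more than the claimed half of the waste. Pinning down the exact constant $\tfrac32$ and the tidy $(\pi(G)+1)(\pi(H)+1)$ factorization, as opposed to a messier expression, is where the bookkeeping will have to be carried out most delicately.
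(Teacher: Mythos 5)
There is a genuine gap, and it sits exactly where you flagged your ``main obstacle'': the salvage step is not merely delicate, it fails outright against an adversarial configuration. A pebbling move requires two pebbles on a \emph{common} vertex, so the leftover $p_g \bmod \pi(H) \le \pi(H)-1$ pebbles in a column can be placed one per vertex (feasible since $|H|$ may be as large as $\pi(H)$, e.g.\ when $H$ is Class~0), in which case that residue is completely immobile: it yields \emph{zero} transferable pebbles, not half. No processing order along a spanning tree and no invariant can extract moves from an immobile configuration, and the leftovers cannot be paired with other pebbles either, since those were already consumed shipping the $\lfloor p_g/\pi(H)\rfloor$ pebbles to the target row. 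Even in the friendlier case where leftovers happen to be co-located, your ``two-for-one transfer into a neighboring column'' only moves one pebble per pair one step; iterating such transfers toward $g_r$ decays geometrically, so the surviving fraction of the waste is nowhere near one half. With the salvage term deleted, your accounting reverts to that of Proposition~\ref{boxProd}, giving roughly $2\pi(G)\pi(H)$, which does \emph{not} imply the stated bound ($2\pi(G)\pi(H) \le \tfrac32(\pi(G)+1)(\pi(H)+1)$ fails once $\pi(G)\pi(H) > 3(\pi(G)+\pi(H)+1)$).

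The paper's proof locates the factor-$\tfrac43$-to-$2$ saving on the \emph{other} side of the product, which is the idea your proposal misses: it makes no attempt to recover column waste, and instead pays slightly more per column, $\pi(H)+1$ per delivered pebble, in exchange for delivering each pebble with a choice of destination in $N[g]\times H$ (either the last move of a column solution is diverted along a diagonal edge --- the ``cut the corner'' move --- or two pebbles accumulate at $(g,h)$ and one moves off). Thus every pebble arriving at the row $G\times r_H$ carries a free first move in the $G$-coordinate, and Lemma~\ref{freeMove} shows that $\phi(G)\le\left\lceil\pi(G)/2\right\rceil$, where $\phi$ is the pebbling number under simultaneous free first moves; the proof doubles the configuration, passes to a minimally solvable subconfiguration (which must have an even number of pebbles per vertex), and identifies the doubled first moves with the free moves. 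So only about half of $\pi(G)$ pebbles are needed on the target row, rather than all $\pi(G)$ of them as in your draft, and the counting of Proposition~\ref{strongProd} gives $\pi(G\boxtimes H)\le\tfrac12(\pi(G)+2|G|+1)(\pi(H)+1)$, from which the theorem follows via $|G|\le\pi(G)$ (Fact~\ref{trivial}(\ref{LowerVerts})). If you want to repair your argument, redirect the halving from the $H$-side waste (irrecoverable in the worst case) to the $G$-side requirement, where it can be made rigorous.
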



The key difference in our approach between the box product and the strong product is the fact that, occasionally, a pebble can move in {\it both} $G$ and $H$. 
Specifically, in the proof of Proposition \ref{boxProd}, all of the pebbles were first moved through copies of $H$, then through a copy of $G$. 
However, in the strong product case, we will be able to ``cut the corner'' for one of the moves as illustrated in Figure~\ref{strongProduct}.

\begin{figure}[htb]
\begin{center}
\includegraphics[scale=1]{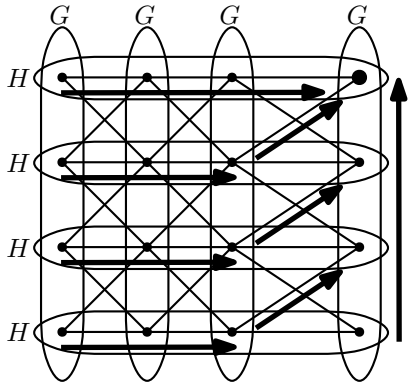}
\end{center}
\caption{$G\boxtimes H$ with pebble movements related to Proposition~\ref{strongProd}}\label{strongProduct}
\end{figure}

To gain the most of this ``cut the corner''  move, we introduce the following function. 
Let $\phi(G)$ be the pebbling number of a connected graph $G$ with the following additional rule: before any normal pebbling moves are made, each pebble may make a single pebbling move without any cost of losing a pebble. 
These moves must be done simultaneously, and a pebble is still considered to reach the root as a result of this move. 
We first show that this ``cut the corner'' move, indeed, saves nearly a factor of $2$ of the pebbles.

\begin{lemma}\label{freeMove} 
Let $\phi(G)$ be defined as directly above.
Then, for any connected graph $G$, we have
\[ \phi(G)\leq \left\lceil\frac{\pi(G)}{2}\right\rceil . \]
\end{lemma}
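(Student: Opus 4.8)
The plan is to bound $\phi(G)$ by relating it to the ordinary pebbling number through a doubling trick. Fix a root $r$; since adding pebbles never hurts, it suffices to prove that every configuration $C$ with $|C|=\lceil \pi(G)/2\rceil$ is $r$-solvable under the free-move rule. Let $2C$ denote the configuration obtained by placing $2C(v)$ pebbles on each vertex $v$. Then $|2C|=2\lceil \pi(G)/2\rceil\ge \pi(G)$, so $2C$ is (ordinarily) $r$-solvable. Hence the entire lemma reduces to the implication: \emph{if $2C$ is ordinarily $r$-solvable, then $C$ is $r$-solvable once each of its pebbles is granted one free move}. The ceiling in the statement is present precisely to guarantee $|2C|\ge\pi(G)$.

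To prove this implication I would first reduce to a tree. Starting from any ordinary $r$-solution of $2C$, the standard no-cycle reduction lets me assume its moves are acyclic and oriented toward $r$; following the ancestry of the single pebble that reaches $r$ then exposes a rooted tree $T$ together with a configuration $D\le 2C$ supported on $T$ that is $r$-solvable using only edges of $T$, with every move directed toward the parent. Replacing $D$ by the slightly larger $2C'$, where $C'=\lceil D/2\rceil\le C$, it therefore suffices to prove: if $2C'$ is $r$-solvable along a rooted tree $T$, then $C'$ (and hence any $C\ge C'$) is $r$-solvable on $T$ with one free move per pebble.

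This tree case is where the free move earns its factor of two, and I would route every free move upward: each pebble takes its single free step from its vertex to that vertex's parent. For a non-root vertex $v$ let $\gamma(v)$ be the number of pebbles this strategy delivers to the parent of $v$ out of the subtree rooted at $v$. Deciding how many of $v$'s own pebbles to push up freely versus keep for ordinary moves, a one-line exchange argument shows that sending all of them up for free is optimal, yielding
\[ \gamma(v)=C'(v)+\left\lfloor \tfrac12\sum_{c}\gamma(c)\right\rfloor, \]
where $c$ ranges over the children of $v$. This is exactly the recursion $f(v)=C'(v)+\lfloor \tfrac12\sum_c f(c)\rfloor$ obeyed by the ordinary-pebbling delivery function $f$ of $2C'$ (since $\lfloor(2C'(v)+\sum_c f(c))/2\rfloor=C'(v)+\lfloor \tfrac12\sum_c f(c)\rfloor$); so by induction from the leaves, $\gamma$ coincides with $f$ at every vertex. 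Consequently the free-move play from $C'$ delivers to the children of $r$ as many pebbles as $2C'$ does, and at $r$ the two totals differ only by $C'(r)$: if $C'(r)=0$ they are equal and $2C'$ reaches $r$, while if $C'(r)>0$ there is already a pebble on $r$. Either way $C'$ is free-move $r$-solvable, finishing the tree case.

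The step I expect to be the main obstacle is the reduction in the second paragraph: passing from an arbitrary $r$-solution of $2C$ to an honestly tree-structured, pebble-for-pebble dominated subsolution requires careful bookkeeping of pebble identities through the no-cycle/ancestry argument, and one must verify that the free moves remain simultaneous and are each spent at most once — subtleties that the upward routing on a tree handles automatically but that are easy to mishandle on a general graph. By comparison, once we are on the tree the recursion above is entirely routine, and the only case analysis needed is the harmless one at the root noted above.
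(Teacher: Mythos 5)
Your opening move coincides with the paper's (double $C$ to get $D=2C$ of size at least $\pi(G,r)$, hence $r$-solvable), and your tree-case analysis is correct: on a rooted tree with all moves toward the parent, the recursion $\gamma(v)=C'(v)+\bigl\lfloor \frac12\sum_{c}\gamma(c)\bigr\rfloor$ does coincide with the ordinary delivery function of $2C'$, which settles the lemma \emph{for trees}. But the step you flag as ``the main obstacle'' is not a bookkeeping subtlety --- it is false. The ancestry of the pebble reaching $r$ in an acyclic solution is in general a DAG, not a tree: a vertex may be forced to send pebbles to two \emph{different} neighbors, and the evenness of $2C$ does not prevent this. Concretely, let $G$ have vertices $u,u',u'',x,y,w,r$ and edges $ux,\,uy,\,u'x,\,u''y,\,xw,\,yw,\,wr$, and let $2C$ place $4$ pebbles on $u$, $2$ on $u'$, and $2$ on $u''$. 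This configuration is $r$-solvable: move one pebble from $u$ to each of $x$ and $y$, one from $u'$ to $x$, one from $u''$ to $y$, then $x\to w$, $y\to w$, $w\to r$. Yet no subconfiguration $D\le 2C$ is $r$-solvable along any rooted tree with moves directed toward the parent: in a tree $u$ has a single parent, so $u$'s pebbles all route through one of $x,y$; that vertex then holds at most $2+1=3$ pebbles and delivers at most $1$ to $w$, while the other of $x,y$ receives at most $1$ pebble and delivers $0$ (the exotic trees routing through $x$--$u$--$y$ do even worse), so $w$ never accumulates the $2$ pebbles needed to reach $r$. Consistently with the lemma, $C$ itself (with $2,1,1$ pebbles on $u,u',u''$) \emph{is} free-move solvable --- the two pebbles at $u$ take their free moves to different neighbors --- which shows that free-move solutions genuinely require splitting, DAG-shaped strategies that your tree case cannot capture. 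So your reduction cannot be repaired by ``careful bookkeeping of pebble identities''; the implication it rests on is simply untrue.

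The paper's proof avoids trees altogether, and the idea you are missing is its pairing argument on the general graph. Among all subconfigurations of $D=2C$, take $D'$ that is $r$-solvable in the fewest steps with no leftover pebbles; minimality, together with the evenness of $D$ (every pebble of $D'$ has a potential partner available at its own vertex), forces $D'$ to have an even number of pebbles at each vertex, since a lone pebble waiting for an arriving partner costs more steps than using the spare partner already sitting at that vertex. One may then assume each pair of pebbles makes its first move together, perform all these first moves at the outset to reach a configuration $D^*$, and reinterpret each pair's first move as the single \emph{free} move of the corresponding merged pebble of $C'\le C$: the resulting configuration $C^*$ equals $D^*$, which is $r$-solvable by ordinary pebbling. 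In short: convert first moves of paired pebbles into free moves directly, rather than imposing tree structure on the solution.
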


\begin{proof}
Let $r$ be the root. 
Let $C$ be a configuration of size $\left\lceil\frac{\pi(G,r)}{2}\right\rceil$ on $G$. Copy each pebble in $C$ to create another configuration $D$ of size at least $\pi(G,r)$; then $D$ is $r$-solvable under standard pebbling.
Now let $D'$ be a non-trivial subconfiguration of $D$ that is minimally $r$-solvable. 
That is, it solves $r$ with the fewest number of steps and with no remaining pebbles. 
Let $\sigma'$ be its corresponding $r$-solution.

We note that $D'$ has an even number of pebbles at each vertex.
Indeed, if some vertex $v$ had an odd number of pebbles then one of its pebbles $p$ would need to wait for another pebble to arrive to $v$ in order to be involved in a pebbling step, which takes more steps than if $p$ already had its partner at $v$, thus contradicting the minimality of $D'$.

Every pebble in $D'$ has a first move in $\sigma'$ (since otherwise $D'$ would not include that pebble), so let $S'$ be the set of all such first moves. 
Because the pebbles in $D'$ come in pairs at each vertex, we can assume that each pebble in $D'$ is paired with another pebble on its initial vertex for its first move, and we can start $\sigma'$ by making all the $S'$ moves. 
Let $D^*$ be the resulting configuration, which of course is $r$-solvable. 

Now back up to think about $D'$ differently.
Merge the pairs of pebbles in $D'$ to create the sub-configuration $C'$ of $C$. 
Make the first moves $S$ with no cost that correspond to the costly moves $S'$, and let $C^*$ be the resulting configuration. 
Notice that $C^*=D^*$, and hence is $r$-solvable.
\end{proof}

\begin{prop}\label{strongProd}
Let $G$ and $H$ both be connected graphs. 
Then
\[\pi(G\boxtimes H) \le \frac{1}{2} (\pi(G)+2|G|+1)(\pi(H)+1).\]
\end{prop}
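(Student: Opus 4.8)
The plan is to mimic the floor-counting argument of Proposition~\ref{boxProd}, but to exploit the ``cut the corner'' savings quantified in Lemma~\ref{freeMove}. Let $C$ be an arbitrary configuration of size $\tfrac12(\pi(G)+2|G|+1)(\pi(H)+1)$ with root $(g_0,h_0)$, and for each $g\in G$ let $p_g$ be the number of pebbles on $g\times H$. The key observation I would record first is that inside a single copy $g\times H$, a configuration of $k(\pi(H)+1)/2$ pebbles suffices to perform $k$ simultaneous \emph{free} moves toward $G\times h_0$: by Lemma~\ref{freeMove} we have $\phi(H)\le\lceil\pi(H)/2\rceil\le(\pi(H)+1)/2$, so roughly $p_g/\phi(H)$ pebbles can be ``launched'' from $g\times H$ into the target row $G\times h_0$ at no extra cost, because the diagonal adjacency of the strong product lets each such launched pebble simultaneously shift one step in the $G$-direction as well. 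This is the mechanism by which one of the two transport stages becomes free.

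**The counting step.**
Next I would set $\pi^\ast=(\pi(H)+1)/2$ and write
\[
|C|=\sum_{g\in G}p_g
=\pi^\ast\left(\sum_{g\in G}\left\lfloor\frac{p_g}{\pi^\ast}\right\rfloor+\sum_{g\in G}\left\{\frac{p_g}{\pi^\ast}\right\}\right)
<\pi^\ast\left(\sum_{g\in G}\left\lfloor\frac{p_g}{\pi^\ast}\right\rfloor+|G|\right),
\]
exactly as in Proposition~\ref{boxProd}, using that each fractional part is strictly below $1$. Dividing by $\pi^\ast$ and rearranging shows that $\sum_{g\in G}\lfloor p_g/\pi^\ast\rfloor$ exceeds $|C|/\pi^\ast-|G|=(\pi(G)+2|G|+1)-|G|=\pi(G)+|G|+1>\pi(G)$. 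Thus at least $\pi(G)$ pebbles can be assembled on the target row $G\times h_0$, and since a copy of $G$ sits on that row, those $\pi(G)$ pebbles solve the root. The factor $2|G|+1$ inside the bound is precisely what absorbs the $|G|$ floor-losses together with the ceiling rounding in $\phi(H)\le\lceil\pi(H)/2\rceil$.

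**The main obstacle.**
The delicate point, and the step I expect to be hardest to make rigorous, is justifying that the free moves from the various copies $g\times H$ can be carried out \emph{simultaneously and independently}, and that they all deposit their pebbles onto the \emph{same} target row $G\times h_0$ in a way that is legal under the strong-product adjacency. In the box product the two stages are genuinely sequential (first move within each $H$-fibre, then within one $G$-fibre), but here the free diagonal move must send a pebble that is being routed within $g\times H$ toward $h_0$ one additional step in the $G$-direction toward $g_0$; I must verify that this diagonal step is always available (it is, since a move is only ``free'' when its endpoint in the $H$-direction is adjacent, and the strong product then also permits the accompanying $G$-shift) and that distinct free moves do not interfere, which follows because Lemma~\ref{freeMove} is applied to each fibre separately on disjoint vertex sets. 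Pinning down the geometry of where each launched pebble lands — and confirming that $\pi(G)$ of them arrive on the target copy of $G$ rather than being scattered across several rows — is the crux; once the routing is set up correctly, the arithmetic above finishes the proof.
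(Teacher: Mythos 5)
There is a genuine gap, and it sits exactly where you flagged the ``main obstacle'': your extraction rate inside each fibre is wrong. You claim that $k(\pi(H)+1)/2$ pebbles on $g\times H$ can launch $k$ pebbles into the target row, justified by $\phi(H)\le\lceil\pi(H)/2\rceil$ from Lemma~\ref{freeMove}. But $\phi(H)$ bounds the pebbling number of a game in which \emph{every pebble is granted a free initial move}, and nothing in the strong product grants free moves to pebbles travelling within a single fibre $g\times H$: the diagonal edges only allow a normal, costly $H$-move to carry an extra one-step shift in the $G$-coordinate at the same price. A diagonal step does not reduce the number of $H$-moves needed, so delivering one pebble from $g\times H$ to the row $G\times r_H$ still costs on the order of $\pi(H)$ pebbles, not $(\pi(H)+1)/2$. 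Concretely, take $H=P_{d+1}$ with all $p_g$ pebbles at $H$-distance $d$ from $r_H$: then $(\pi(H)+1)/2\approx 2^{d-1}$ pebbles cannot place any pebble on the target row, since traversing $d$ edges of $H$ requires $2^d$ pebbles whether or not the last move is diagonal. Hence your counting step, which divides by $\pi^\ast=(\pi(H)+1)/2$ and aims to assemble $\pi(G)$ pebbles on the row, rests on a false premise.

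The factor-of-two saving must instead be harvested on the $G$ side, which is how the paper proceeds. One charges $\pi(H)+1$ pebbles per extracted pebble (the $+1$ covers the degenerate case where two pebbles already sit on $(g,r_H)$ and are spent on a direct move to a neighbor), and observes that each extracted pebble can be placed not just at $(g,r_H)$ but at $(g',r_H)$ for any $g'\in N[g]$, by making the final move of the $H$-solution diagonal. Thus the resulting configuration on the row $G\times r_H\cong G$ is precisely an instance of the $\phi(G)$ game --- each pebble arrives with one free $G$-move already banked --- so by Lemma~\ref{freeMove} applied to $G$, only $\lceil\pi(G)/2\rceil$ delivered pebbles are needed rather than $\pi(G)$. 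The same floor/fractional-part computation as in Proposition~\ref{boxProd}, now with divisor $\pi(H)+1$, yields
\[
\sum_{g\in G}\left\lfloor \frac{p_g}{\pi(H)+1}\right\rfloor \;>\; \frac{\pi(G)+2|G|+1}{2}-|G| \;=\; \frac{\pi(G)+1}{2}\;\ge\;\left\lceil\frac{\pi(G)}{2}\right\rceil,
\]
which closes the argument. Your overall architecture (fibre counting plus free-move savings) is the right one; the repair is to move the application of Lemma~\ref{freeMove} from the $H$-fibres to the target copy of $G$.
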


\begin{proof}
We will follow a similar proof to that posed in Proposition~\ref{boxProd}. 
Let $(r_G,r_H)$ be the root,  $C$ be a configuration of at least $\frac{1}{2} (\pi(G)+2|G|+1)(\pi(H)+1)$ pebbles, and for each vertex $g \in G$ let $p_{g}$ denote the total number of pebbles on the vertices in $g \times H$.

As with the proof of Proposition~\ref{boxProd}, we want to show that sufficiently many pebbles can reach some vertex in $G\times r_H$. 
The key to the proof is that the number of pebbles needed to move onto the subgraph $G \times r_H$ is sufficiently less than before.
Let $N(g)$ denote the {\it open neighborhood} $\{g'\in G\mid g'\sim_G g\}$,
and define the {\it closed neighborhood} $N[g]=N(g)\cup\{g\}$.

{\it Claim.} 
If $p_g \geq k (\pi(H)+1)$, then a total of $k$ pebbles can be moved to any configuration on $N[g] \times H$. 

{\it Proof of Claim}. 
Partition the $p_g$ pebbles into $k$ equitable parts (or as equitable as possible). We show that each part will allow for at least one pebble to reach any vertex $(g', h)  \in N[g] \times H$. 
For each part, observe that using only those pebbles either (a) one pebble can be moved onto $(g,h)$ using only those pebbles or (b) there are two pebbles at $(g, h)$. 
In case (a), notice that, for the final step of the corresponding $h$-solution, the pebble may be moved to any $(g', h)$ (instead of $(g, h)$)  
In case (b), the two pebbles at $(g, h)$ can be used to make a pebbling move to $(g', h)$ for any $g \sim_G g'$. 
$\hfill\diamondsuit$


As a result of the claim together with Lemma \ref{freeMove}, it follows that,
whenever $\sum_{g \in V(G)} \left\lfloor \frac{p_g}{\pi(H) + 1} \right\rfloor$ 
$> \lceil \pi(G) / 2 \rceil$, there is an $(r_G, r_H) $-solution.



We have:
\begin{eqnarray*}
\frac{(\pi(G)+2|G|+1)(\pi(H)+1)}{2}&\leq& \sum_{i \in G} p_i \\
  &=&  (\pi(H)+1) \sum_{i \in G} \frac{p_i}{\pi(H)+1}  \\
  &=&  (\pi(H)+1) \left(\sum_{i \in G} \left\lfloor \frac{p_i}{\pi(H)+1} \right\rfloor  + \sum_{i \in G} \left\{ \frac{p_i}{\pi(H)+1} \right\}  \right) \\
&<&  (\pi(H)+1) \left( \sum_{i \in G} \left\lfloor \frac{p_i}{\pi(H)+1} \right\rfloor  + |G| \right)\ .  
\end{eqnarray*}
Therefore,
\[
  \left\lceil\frac{\pi(G)}{2}\right\rceil \le \frac{(\pi(G)+2|G|+1)(\pi(H)+1)}{2(\pi(H)+1)}-|G| < \sum_{i\in G} \left\lfloor \frac{p_i}{\pi(H)+1}\right\rfloor\ ,
\]
where the first inequality follows from the properties of the ceiling function, and the second inequality follows directly from above. 
This completes the proof.
\end{proof}

\begin{proof}[Proof of Theorem \ref{32stronggraham}]
The proof follows from Proposition \ref{strongProd} and Fact \ref{trivial}(\ref{LowerVerts}).
\end{proof}

Upon comparing the proof of Proposition \ref{strongProd} to the proof of Proposition \ref{boxProd}, a ``shortcut'' is taken, saving one move, and up to a factor of 2 of the pebbles. 
As a result, it may seem reasonable to conjecture that $\pi(G\boxtimes H) \le \frac{1}{2}\pi(G) \pi(H)$. 
However, $\pi(K_2 \boxtimes P_3) = 6$ whereas $\pi(K_2) = 2$ and $\pi(P_3) = 4$. 
The gains made by the shortcut are lost by the fact that there is now more room to place unused pebbles. 
As a result, we conjecture the following variation to Graham's conjecture for strong products.

\begin{conj}
For any connected graphs $G$ and $H$,
\[ \pi(G\boxtimes H) \le   \max \left\{ \frac{1}{2}\pi(G)\pi(H), |G||H| \right\}+2. \]
\end{conj}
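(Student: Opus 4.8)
The statement is a conjecture rather than a theorem, so what follows is a plan of attack rather than a claim of proof; I expect the full resolution to be genuinely hard, and I would first try to settle large families before attempting the general case. The plan is to split the argument according to which term of the maximum dominates, since the two terms reflect two qualitatively different obstructions. When $\pi(G)\pi(H) < 2|G||H|$ the vertex bound $|G||H|$ controls, and this is exactly the regime in which both factors are ``close to Class~0'': by Fact~\ref{trivial}(\ref{LowerDiam}) a large pebbling number forces large diameter, so a small ratio $\pi(G)/|G|$ goes hand in hand with small $\diam(G)$.

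Here I would exploit the standard identity $\diam(G\boxtimes H)=\max(\diam(G),\diam(H))$ for the strong product: the product of two low-diameter graphs is itself low-diameter. For the cleanest subcase, if both $G$ and $H$ have diameter at most $2$ then $G\boxtimes H$ has diameter at most $2$, and it is classical that a diameter-$2$ graph $F$ satisfies $\pi(F)\le|F|+1$; this gives $\pi(G\boxtimes H)\le|G||H|+1\le|G||H|+2$ at once, and the example $\pi(K_2\boxtimes P_3)=6=|K_2||P_3|$ is of exactly this type. The harder part of this regime is graphs that are near-Class-0 without having diameter $2$, where I would adapt the frugality argument of Proposition~\ref{tightBoxProd} to the strong product, replacing the per-copy cost $\pi(H)$ by $2^{\diam(H)}$ so that the additive overhead collapses when $2^{\diam(H)}$ is small.

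When instead $\pi(G)\pi(H)\ge 2|G||H|$ the target is $\pi(G\boxtimes H)\le \tfrac12\pi(G)\pi(H)+2$, and the job is to recover the factor $\tfrac12$ of Lemma~\ref{freeMove} while eliminating \emph{all} of the additive slack present in Proposition~\ref{strongProd}. Tracing that proof, the slack $\tfrac12(\pi(G)+2|G|\pi(H)+\cdots)$ comes from two sources: the $+1$ in each factor $\pi(H)+1$, and the floor $\lfloor p_g/(\pi(H)+1)\rfloor$, which can discard up to $\pi(H)$ pebbles in each of the $|G|$ copies. To kill the floor loss I would stop accounting copy-by-copy and instead build a single weighted configuration on $G$, assigning to $g$ the number of pebbles deliverable to one vertex of $g\times H$, and argue that leftover pebbles in a copy need not be wasted: because $(g,h)\sim(g',h')$ whenever $g\sim_G g'$ and $h\sim_H h'$, a surplus sitting in $g\times H$ can be routed diagonally to contribute to a \emph{neighboring} copy's weight rather than being rounded away. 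Formalizing this recycling of remainders, so that the total usable weight is essentially $|C|/\pi(H)$ with no per-copy penalty, and then feeding it into $\phi(G)\le\lceil\pi(G)/2\rceil$, is the technical heart of the plan.

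The main obstacle is precisely this recycling step in the product-dominated regime: the diagonal moves that make the strong product cheaper also make the bookkeeping global rather than local, so the clean per-copy estimate of Proposition~\ref{strongProd} must be replaced by an argument that tracks where remainders can be combined, and it is not clear that remainders are always positioned so that their neighbors align diagonally. I expect that controlling this, together with stitching the two regimes across the boundary $\pi(G)\pi(H)\approx 2|G||H|$, where neither term has slack to spare and the additive $+2$ must absorb all rounding simultaneously, is where the real difficulty lies, and why the conjecture remains open.
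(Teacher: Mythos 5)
You correctly recognize that this statement is a conjecture: the paper offers no proof, only the evidence that the bound is attained with equality by $K_2\boxtimes P_n$, where $\pi(K_2\boxtimes P_n)=2^{n-1}+2=\tfrac12\pi(K_2)\pi(P_n)+2$ for $n\ge 4$. Judged as a plan, your proposal contains one genuinely valid partial result: since $\diam(G\boxtimes H)=\max(\diam(G),\diam(H))$, if both factors have diameter at most $2$ then so does the product, and the classical diameter-two bound $\pi(F)\le |F|+1$ verifies the conjecture on that subfamily. That observation is sound and worth recording.

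However, the case split on which the rest of the plan rests has a concrete flaw. Fact~\ref{trivial}(\ref{LowerDiam}) says $2^{\diam(G)}\le\pi(G)$, i.e., small pebbling number forces small diameter \emph{in absolute terms}; it does not say that a small ratio $\pi(G)/|G|$ forces small diameter. Lollipop-type graphs ($K_m$ with a pendant path of length $d$, with $m\gg 2^d$) have $\pi(G)/|G|$ tending to $1$ while $\diam(G)=d+1$ is unbounded, and products of two such graphs sit squarely in your vertex-dominated regime $\pi(G)\pi(H)<2|G||H|$ without any low-diameter structure, so neither the diameter-two argument nor an adaptation of Proposition~\ref{tightBoxProd} applies: the latter inevitably carries an additive overhead of order $2^{\diam(H)}\pi(G)$, which cannot be compressed to the $+2$ the conjecture allows. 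In the product-dominated regime, your ``recycling of remainders'' is a restatement of the difficulty rather than a mechanism: the tight example above shows there is zero slack, every floor loss in the accounting of Proposition~\ref{strongProd} (whose bound, incidentally, you misquote --- it is $\tfrac12(\pi(G)+2|G|+1)(\pi(H)+1)$) is fatal, and you give no argument that leftover pebbles are positioned so they can be paired for a diagonal move --- a remainder of $\pi(H)$ pebbles spread one per vertex within a copy $g\times H$ can make no move at all, diagonal or otherwise. So your plan correctly identifies where the hard part lies, but the two regimes as drawn do not cover the statement, and the conjecture remains open, as the paper also leaves it.
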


We note that $\pi(K_2 \boxtimes P_n) = 2^{n-1}+2$ if $n\geq 2$ whereas $\pi(K_2) =2, \pi(P_4)=8$.

\section{Pebbling on Cross Products}\label{cross}
%

Previously, our main technique has been to push pebbles in a canonical manner. 
However, in the cross product $G \times H$, $G$ (or $H$) is not necessarily a subgraph. 
Hence, using the structure of $G$ or $H$ for pebbling within $G \times H$ appears to be daunting. 
As exemplified by $K_2 \times K_2$, $G \times H$ is not connected when both $G$ and $H$ are bipartite and neither $G$ nor $H$ is $K_1$.

Our main method to overcome these obstacles is to consider connected spanning bipartite subgraphs of $G$ and $H$. This can be seen in Figure~\ref{crossProdPic}.

Such subgraphs exist provided the graphs are connected; for instance, consider any spanning tree. 
We will prove the following:

\begin{theorem}\label{bicross}
Let $G ~(\ne K_1)$ and $H$ be connected graphs with $H$ nonbipartite. 
For $G$ and $H$, choose any connected spanning bipartite subgraphs, $G'$ and $H'$, respectively. 
Then,
\[ \pi(G\times H) \le 2( \pi(G')+|G|) \pi(H')^2. \]
\end{theorem}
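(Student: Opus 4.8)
The plan is to adapt the ``push-then-collect'' strategy of Proposition~\ref{boxProd} to the cross product, where the central difficulty is that neither $G$ nor $H$ embeds as a subgraph of $G\times H$. The key structural idea, as the authors foreshadow, is to pass to connected spanning bipartite subgraphs $G'$ and $H'$. The reason this helps is that if $G'$ is bipartite with parts $(A,B)$, then within $G'\times H'$ one can simulate motion ``along $G'$ alone'': a single step in $G\times H$ requires moving simultaneously in both coordinates, but by taking \emph{two} steps along an edge $gg'$ in $G'$ (going $g\to g'\to g$) while oscillating between two adjacent vertices $h\sim_{H'}h'$ in the second coordinate, one returns to the original $H$-coordinate while having effectively traversed a path in $G'$. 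This is exactly why $H'$ must be used quadratically and why the factor $\pi(H')^2$ appears. First I would make this ``coordinate simulation'' precise: fix an edge $h_0 h_1\in E(H')$ near the target's second coordinate and show that $k$ pebbles sitting on a single vertex $(g,h_0)$ can be routed to $(g',h_0)$ for any $g'$ reachable from $g$ in $G'$, at a cost governed by $\pi(H')$ per simulated $G'$-step.

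Next I would set up the counting exactly as in Proposition~\ref{boxProd}. Given an arbitrary configuration $C$ of size $2(\pi(G')+|G|)\pi(H')^2$, for each $g\in G$ let $p_g$ be the number of pebbles on $g\times H$. Because $H'$ is a connected \emph{nonbipartite} spanning subgraph (inherited from $H$ nonbipartite), $H'$ contains an odd cycle, so $G'\times H'$ is connected and, more importantly, the ``collecting'' phase within a single copy $g\times H$ behaves like ordinary pebbling on $H'$. I would argue that whenever $p_g\ge k\,\pi(H')$ one can deliver $k$ pebbles to a chosen vertex of $g\times H$, so that the number of pebbles that can be consolidated at the level of $G$ is at least $\sum_{g}\lfloor p_g/\pi(H')\rfloor$. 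Then an averaging/floor argument identical in form to Proposition~\ref{boxProd} — splitting $p_g/\pi(H')$ into integer and fractional parts and using $\{x\}<1$ — shows this sum exceeds a threshold that I will arrange to be $\pi(G')+|G|$ (or the appropriate quantity after the second factor of $\pi(H')$ and the factor of $2$ are accounted for).

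The factor of $2$ and the second power of $\pi(H')$ come from the ``simulate a $G'$-walk'' step: to move pebbles across the graph $G'$ toward the target column, each elementary $G'$-step costs an extra factor of $\pi(H')$ (because the oscillation in the $H$-coordinate consumes pebbles at the rate of pebbling on $H'$), and the factor $2$ absorbs the parity/round-trip overhead of the two-step simulation together with a Lemma~\ref{freeMove}-style saving or the $|G|\le\pi(G')$ slack from Fact~\ref{trivial}(\ref{LowerVerts}). Concretely, once $\pi(G')+|G|$ units are available at the $G$-level, I would invoke the pebbling number of $G'$ to route them to the target column, paying the promised $\pi(H')$-per-step cost, which is precisely where the final $\pi(H')^2$ (one power for consolidation within columns, one for transport across columns) and the leading $2$ are consumed.

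\textbf{Main obstacle.} The hardest part will be making the coordinate-simulation rigorous: proving that a pebbling solution on $G'$ (which ``sees'' only the $G$-coordinate) can be faithfully lifted to $G\times H$ using only genuine cross-product edges, with the claimed overhead of exactly one factor of $\pi(H')$ per move and no uncontrolled loss from the forced simultaneous motion in the $H$-coordinate. In particular one must ensure the oscillation edge $h_0h_1$ is always available at each intermediate $G'$-vertex and that returning to the correct $H$-coordinate after an odd- versus even-length $G'$-walk is handled (this is where nonbipartiteness of $H'$, supplying an odd cycle to fix parity, becomes essential), so verifying connectivity and the parity bookkeeping of $G'\times H'$ is the crux on which the whole bound rests.
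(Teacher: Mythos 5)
There is a genuine gap, and it sits exactly where you locate your ``main obstacle.'' Your collection phase assumes that pebbling ``within a single copy $g\times H$ behaves like ordinary pebbling on $H'$,'' but $g\times H$ is an \emph{independent set} in $G\times H$: every cross-product edge changes the $G$-coordinate, so no pebbling move at all can be made inside a column, and $p_g\ge k\,\pi(H')$ delivers nothing. The paper avoids columns entirely and works with slabs: for each $g$ it fixes an edge $e_g\in E(G)$ containing $g$ and collects inside $e_g\times H$, which contains $K_2\times H$ as a spanning subgraph; the crucial Lemma~\ref{k2cross} shows $\pi(K_2\times H)\le 2\pi(H')^2$. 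That lemma is also where nonbipartiteness enters, and your parity bookkeeping is backwards: $H'$ is \emph{bipartite} by hypothesis (you call it a ``connected nonbipartite spanning subgraph,'' contradicting the setup), and precisely because $H'$ is bipartite, $K_2\times H'$ splits into two disjoint copies of $H'$; it is the nonbipartiteness of $H$ that supplies an edge $\{a,b\}$ of $H$ joining two vertices in the same partite class of $H'$, and this single edge links the two copies. The exponent $2$ then arises as $2\pi(H')$ pebbles in the far copy per pebble delivered across the linking edge, times the $\pi(H')$ pebbles needed in the near copy --- not as ``one power for consolidation, one for transport.''

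The second gap is your transport phase. Simulating each $G'$-step by oscillating the $H$-coordinate at ``an extra factor of $\pi(H')$ per move'' cannot work even in principle, because pebbling costs compound multiplicatively along a walk: a per-edge overhead of $\pi(H')$ would produce a bound exponential in the length of the $G'$-walk (on the order of $\pi(H')^{\mathrm{diam}(G')}$), not the claimed product $2(\pi(G')+|G|)\pi(H')^2$. The paper's key observation makes any simulation unnecessary: since $G'$ is bipartite with parts $(A,B)$, a genuine isomorphic copy of $G'$ sits inside $G\times H$ on the vertex set $G\times\{r_H,x\}$ for any $H$-edge $\{r_H,x\}$ (one part at level $r_H$, the other at level $x$, arranged so the copy contains $(r_G,r_H)$); every $G'$-edge then moves in both coordinates simultaneously and so is a legitimate cross-product edge. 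Each slab $e_g\times H$ meets this copy in at least one vertex, so $\lfloor p_g/(2\pi(H')^2)\rfloor$ pebbles from each slab can be landed on it, and the same fractional-part computation as in Proposition~\ref{boxProd} gives $\sum_{g}\lfloor p_g/(2\pi(H')^2)\rfloor>\pi(G')$, after which ordinary pebbling inside the copy of $G'$ reaches the root with zero overhead. One further detail your column-based $p_g$ hides but the slabs force: since the slabs $e_g\times H$ overlap, the paper orders $V(G)$ and counts each pebble in only one slab, so that $\sum_g p_g$ equals the total configuration size.
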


\begin{figure}[t]
\begin{center}
\includegraphics[scale=1]{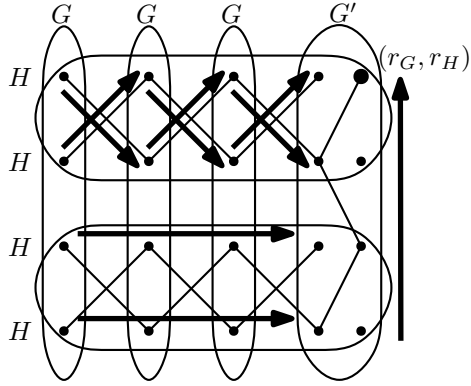}
\end{center}
\caption{$G\times H$ with pebble movements related to Theorem~\ref{bicross}}\label{crossProdPic}
\end{figure}

To prove this theorem, we will consider the special case $K_2 \times H$ where $H$ is nonbipartite.

\begin{lemma}\label{k2cross}
Let $H$ be a connected nonbipartite graph and let $H'$ be a connected spanning bipartite subgraph. 
Then
\[ \pi(K_2\times H) \le 2\pi(H')^2. \]
\end{lemma}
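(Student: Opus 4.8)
The plan is to exploit the fact that $K_2\times H$ is the bipartite double cover of $H$, and to reduce pebbling on it to (at most) two rounds of pebbling on the bipartite subgraph $H'$.

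First I would analyze the structure. Writing the two vertices of $K_2$ as $+$ and $-$, each vertex $h\in V(H)$ has two lifts $h^+,h^-$, and $(g^\epsilon, h^{\epsilon'})$ is an edge of $K_2\times H$ exactly when $g\sim_H h$ and $\epsilon\ne\epsilon'$; in particular $K_2\times H$ is connected precisely because $H$ is nonbipartite. Let $(A,B)$ be the bipartition of $H'$. The spanning subgraph $K_2\times H'$ splits into exactly two components, $H'_1=\{a^+:a\in A\}\cup\{b^-:b\in B\}$ and $H'_2=\{a^-:a\in A\}\cup\{b^+:b\in B\}$, and the map sending each vertex of $H'_i$ to the vertex of $H'$ it is a lift of is an isomorphism from each of $H'_1,H'_2$ onto $H'$. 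Because $H$ is nonbipartite there is at least one edge $xy\in E(H)$ with $x,y$ on the same side of $(A,B)$, say $x,y\in A$; such an edge is not in $H'$, and its two lifts $x^+y^-$ and $x^-y^+$ are ``bridges'' joining $H'_1$ and $H'_2$. This is the only mechanism by which pebbles pass between the two copies, which is exactly what forces the quadratic bound.

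Next I would record the auxiliary $k$-fold estimate $\pi_k(H')\le k\,\pi(H')$, proved by induction on $k$: any $r$-solvable configuration contains a minimal $r$-solvable subconfiguration $C'$, and since deleting any one pebble of $C'$ leaves an unsolvable configuration we get $|C'|\le\pi(H')$; solving $C'$ delivers one pebble to $r$ while consuming at most $\pi(H')$ pebbles, leaving at least $(k-1)\pi(H')$ pebbles to which the inductive hypothesis applies.

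With these tools the strategy is as follows. Put the root $r$ in the copy $H'_1$, fix the bridge $x^+y^-$ with $y^-\in H'_2$ and $x^+\in H'_1$, and set $d=\diam(H')$. If $H'_1$ already carries at least $\pi(H')$ pebbles we finish inside $H'_1\cong H'$. Otherwise essentially all $2\pi(H')^2$ pebbles lie in $H'_2$; using $\pi_k(H')\le k\pi(H')$ inside $H'_2\cong H'$ I would pile $2^{\,d+1}$ pebbles onto $y^-$ (which needs $\pi_{2^{d+1}}(H')\le 2^{\,d+1}\pi(H')\le 2\pi(H')^2$ pebbles, the last step using $2^{\,d}\le\pi(H')$ from Fact~\ref{trivial}), cross the bridge to deposit $2^{\,d}$ pebbles on $x^+$, and then reach $r$ within $H'_1$ since $2^{\,d}=2^{\diam(H')}$ pebbles on a single vertex suffice to reach any target of $H'_1$. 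Combining the two cases gives $\pi(K_2\times H)\le 2\pi(H')^2$. The main obstacle is the bookkeeping in the borderline regime where the pebbles are split across the two copies and $2^{\diam(H')}$ is close to $\pi(H')$ (e.g.\ when $H'$ is a path), since there the crude estimate $\pi_{2^{d+1}}(H')\le 2^{\,d+1}\pi(H')$ is nearly tight and one must check that pebbles stranded in $H'_1$ do not cost the final pebble; I expect to absorb this either by a single counting computation in the spirit of Proposition~\ref{boxProd} (tallying $\lfloor p/\pi(H')\rfloor$ over the two copies) or by using the genuine slack coming from $\dist(x^+,r)\le d$ together with the freedom to choose the bridge endpoint close to $r$.
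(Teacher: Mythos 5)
Your structural setup is exactly the paper's: the double cover splits into two disjoint copies of $H'$ joined only by the lifts of a same-side edge of $H$, and your auxiliary bound $\pi_k(H')\le k\,\pi(H')$ is correct (the paper uses it implicitly, batching pebbles into groups of $\pi(H')$). The gap is in your delivery strategy, and it is precisely the borderline you flag but do not close. In your Case 2 you need $2^{d+1}\pi(H')$ pebbles inside $H'_2$ to pile $2^{d+1}$ onto $y^-$; when $2^{d}=\pi(H')$ this is all $2\pi(H')^2$ pebbles, so if $H'_1$ holds any $y$ with $0<y<\pi(H')$ (too few to solve there, but enough to starve $H'_2$) the piling argument fails. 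Your hedge of ``choosing the bridge endpoint close to $r$'' is not available in general: the bridges are forced to be lifts of same-side edges of $H$, and for $H=C_{2k+1}$ with $H'$ the spanning path there is exactly one such edge, its lift can sit at distance exactly $d=\diam(H')$ from $r$, and $2^{d}=\pi(H')$ -- so this family lands squarely in your bad regime with zero slack.

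The repair is your other hedge, actually carried out, and it is the paper's proof: never concentrate $2^{d}$ pebbles anywhere. Ferry pebbles across the bridge one at a time -- each costs $2\pi(H')$ pebbles from $H'_2$ (bring $2$ to the bridge endpoint in $H'_2$, push $1$ across) -- and then solve inside $H'_1\cong H'$ using the \emph{combined} configuration of the $y$ resident pebbles plus the $\left\lfloor z/(2\pi(H'))\right\rfloor$ immigrants, which is $r$-solvable as soon as its total size reaches $\pi(H')$. The point your case split misses is that resident pebbles count at full value while only the far-side pebbles are discounted by $2\pi(H')$, so both cases merge into the single averaging inequality $y+z/(2\pi(H'))\ge (y+z)/(2\pi(H'))=\pi(H')$ when $y+z=2\pi(H')^2$; the floor costs nothing, since $2\pi(H')^2-y\ge 2\pi(H')\bigl(\pi(H')-y\bigr)$ for all $y\ge 0$. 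No diameter considerations enter at all, which is why the paper's argument has no tight borderline to check.
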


\begin{proof}
Since $H$ is nonbipartite, we can choose an edge, $\{a,b\}$, that is  in $H'$ but not $H$, and $a$ and $b$ are in the same partite set. 
Since $H'$ is a connected spanning bipartite subgraph of $H$, we can construct a connected bipartite spanning subgraph of $K_2 \times H$, using two copies ($H'_0$ and $H'_1$) of $H'$ and adding a single edge $\{a', b'\}$ with $a'$ being the copy of $a$ in $V(H'_0)$ and $b'$ being the copy of $b$ in $V(H'_1)$. 

Choose a root $r \in V(K_2 \times H)$ and, without loss of generality, let $r \in V(H'_0)$. 
For every 2 pebbles that reach $b'$, we can move one pebble to $a'$. 
It follows that we can move one pebble to $a'$ for every $2\pi(H')$ pebbles in $H'_1$. 

Let $y$ and $z$ denote the number of pebbles initially in $H'_0$ and $H'_1$ respectively. 
By pebbling in $H' _1$ through $\{a',b'\}$ and then through $H'_0$ in this manner, we see that, if  $y + \frac{z}{2\pi(H')}\ge \pi(H')$, then one pebble can necessarily reach $r$. 
Therefore, since $y,z \ge 0$ and $y+z = 2\pi(H')^2$, we have $y + \frac{z}{2\pi(H')}\ge \frac{y+z}{2\pi(H')}= \pi(H')$, which finishes the proof. 
\end{proof}

We are now ready to prove the main result of this section.

\begin{proof}[Proof of Theorem \ref{bicross}]

Let $(r_G,r_H)$ be the root and $C$ be a configuration of $2(\pi(G')+ |G|)\pi(H')^2$ pebbles. 
For each vertex $g \in G$ choose an edge $e_g \in E(G)$ such that $g \in e_g$. 
Arbitrarily order the vertices of $G$, and let $p_g$ denote the number of pebbles on $V(e_g \times H)\setminus \bigl(\bigcup_{g'>g}V(e_{g'} \times H)\bigr)$, 
so that each pebble is counted once, and $\sum_g p_g$ is exactly the total number of pebbles.

Note that, since $G'$ is a connected spanning bipartite subgraph of $G$, we can find $G'$ as a subgraph of $G \times H$ containing $(r_G,r_H)$ among the vertices of $G \times \{r_H, x\}$ for any edge $\{r_H, x\}$ in $H$ (such an edge exists because $H$ is connected). 
Further, for every $g \in G$, the vertices of this subgraph $G'$ will contain at least one vertex in $e_g\times H$. 

By Lemma \ref{k2cross}, every $2\pi(H')^2$ pebbles within $e_g \times H$ can be used to move a single pebble onto any vertex in $e_g \times H$.
In particular, if $p_g \ge 2 k \pi(H')^2$, then $k$ pebbles can be moved to any single vertex in $e_g \times H$. 
Since $e_g \times H$ intersects our chosen copy of $G'$ in at least one vertex, if $\sum_{i \in G} \lfloor \frac{p_i}{2\pi(H')^2} \rfloor \ge \pi(G')$, a total of $\pi(G')$ pebbles can be moved to our chosen subgraph $G'$. 
From there, pebbles can be moved within $G'$ to the root. 
Therefore, since $C$ is arbitrary, it suffices to show that $\sum_{i \in G} \left\lfloor \frac{p_i}{2\pi(H')^2} \right\rfloor \ge \pi(G')$.

We have:
\begin{eqnarray*}
2(\pi(G')+ |G|)\pi(H')^2  &=& \sum_{i \in G} p_i \\
  &=&  2\pi(H')^2 \sum_{i \in G} \frac{p_i}{2\pi(H')^2}  \\
  &=&  2\pi(H')^2 \left(\sum_{i \in G} \left\lfloor \frac{p_i}{2\pi(H')^2} \right\rfloor  + \sum_{i \in G} \left\{ \frac{p_i}{2\pi(H')^2} \right\}  \right) \\
&<&  2\pi(H')^2 \left( \sum_{i \in G} \left\lfloor \frac{p_i}{2\pi(H')^2} \right\rfloor  + |G| \right).  
\end{eqnarray*}
Therefore
\[
  \pi(G') = \frac{2(\pi(G')+ |G|)\pi(H')^2}{2 \pi(H')^2} -|G| < \sum_{i \in G} \left\lfloor \frac{p_i}{2\pi(H')^2} \right\rfloor  + |G| -|G| = \sum_{i \in G} \left\lfloor \frac{p_i}{2\pi(H')^2} \right\rfloor. 
\]
This completes the proof.
\end{proof}

\begin{cor}
Let $G$ $(\ne K_1)$ and $H$ be connected graphs with $H$ nonbipartite. 
For $G$ and $H$, choose any connected spanning bipartite subgraphs, $G'$ and $H'$, respectively. 
Then,
\[ \pi(G\times H) \le 4 \pi(G') \pi(H')^2. \] \hfill $\qed$
\end{cor}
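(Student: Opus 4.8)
The plan is to obtain this corollary as an immediate consequence of Theorem~\ref{bicross}, exactly in the spirit of how Theorem~\ref{2graham} is derived from Proposition~\ref{boxProd}. Theorem~\ref{bicross} already gives
\[ \pi(G\times H) \le 2(\pi(G')+|G|)\pi(H')^2, \]
so the only task is to absorb the additive $|G|$ term into $\pi(G')$.

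The key observation is that $G'$ is a \emph{spanning} bipartite subgraph of $G$, so $V(G')=V(G)$ and hence $|G'|=|G|$. First I would invoke Fact~\ref{trivial}(\ref{LowerVerts}) applied to the connected graph $G'$, which yields $\pi(G')\ge |G'|=|G|$. Substituting $|G|\le\pi(G')$ into the bound from Theorem~\ref{bicross} gives
\[ \pi(G\times H) \le 2(\pi(G')+|G|)\pi(H')^2 \le 2\bigl(\pi(G')+\pi(G')\bigr)\pi(H')^2 = 4\,\pi(G')\pi(H')^2, \]
which is precisely the claimed inequality.

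I do not expect any genuine obstacle here: the corollary is a one-line weakening of the sharper bound in Theorem~\ref{bicross}, trading the tighter factor $\pi(G')+|G|$ for the cleaner factor $2\pi(G')$. The only point requiring (minimal) care is recognizing that the lower bound $\pi(G')\ge|G|$ is legitimate because spanning subgraphs preserve the vertex set, so $|G'|=|G|$ and Fact~\ref{trivial}(\ref{LowerVerts}) applies directly to $G'$ rather than to $G$.
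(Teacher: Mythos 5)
Your proof is correct and matches the paper's (implicit) argument exactly: the corollary follows from Theorem~\ref{bicross} by applying Fact~\ref{trivial}(\ref{LowerVerts}) to the connected spanning subgraph $G'$, using $|G'|=|G|$ to get $|G|\le\pi(G')$ and hence $2(\pi(G')+|G|)\pi(H')^2\le 4\pi(G')\pi(H')^2$. The paper states the corollary with no written proof precisely because it is this one-line weakening, which you have spelled out carefully.
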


It is worth noting that, for these results, the choice of which graph is $G$ and which graph is $H$ is important. 
In fact, the condition that $H$ is nonbipartite is applied subtly in Lemma \ref{k2cross}. 
Hence, with these methods, a nonbipartite graph must take the role of $H$.

The concept of using spanning bipartite subgraphs is undoubtedly unaesthetic. 
Indeed, we believe that a similar type of inequality should hold without the use of bipartite subgraphs.

\begin{conj}
Let $G$ and $H$ be connected graphs with $G \ne K_1$ and $H$ nonbipartite. 
Then,
\[ \pi(G\times H) \le \frac{9}{16} \pi(G) \pi(H)^2. \]
\end{conj}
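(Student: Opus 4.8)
The plan is to retain the floor-counting framework of Theorem~\ref{bicross}, but to run it with $G$ and $H$ directly rather than with bipartite spanning subgraphs $G',H'$. Since $\frac{9}{16}\pi(K_2)=\frac98$, the conjecture specializes at $G=K_2$ to $\pi(K_2\times H)\le\frac98\pi(H)^2$, and I take this as the base case. A direct transcription of the Theorem~\ref{bicross} count, even granting a sharp base case, is far too lossy: pricing each block of pebbles on $e_g\times H$ at $c=\frac98\pi(H)^2$ and aggregating with $\sum_g\lfloor p_g/c\rfloor > \sum_g(p_g/c)-|G|$ together with $|G|\le\pi(G)$ (Fact~\ref{trivial}(\ref{LowerVerts})) yields only about $\frac94\pi(G)\pi(H)^2$. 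Reaching $\frac{9}{16}$ therefore requires recovering a further factor of $4$, which I would seek as two independent factor-$2$ savings.

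For the base case I would exploit that $K_2\times H$ is exactly the bipartite double cover $\widetilde H$ of $H$, connected precisely because $H$ is nonbipartite. The covering projection $\widetilde H\to H$ is a graph homomorphism under which configurations and pebbling moves push forward, so a configuration of size $N$ on $\widetilde H$ projects to one of size $N$ on $H$. The idea is to solve in $H$ to deliver pebbles to $r_H$ using $\pi(H)$ pebbles, and then correct the fiber (the $K_2$-coordinate) whenever a pebble arrives in the wrong copy by routing it around an odd closed walk of $H$ through $r_H$, which lifts to a path joining the two fibers of $\widetilde H$. The target constant $\frac98$ is the right one asymptotically, since $\pi(K_2\times C_{2k+1})/\pi(C_{2k+1})^2=\pi(C_{4k+2})/\pi(C_{2k+1})^2\to\frac98$, so any correct argument must be tight to this factor.

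The general-$G$ step is the genuinely new difficulty relative to Theorem~\ref{bicross}, whose proof used bipartiteness of $G'$ to embed it on two $H$-levels $G\times\{r_H,x\}$; for nonbipartite $G$ no such two-level embedding exists. Here I would instead use the odd cycles of $H$ to \emph{simulate} the odd cycles of $G$ inside $G\times H$: a closed odd walk in $H$ lets a pebble traverse an odd cycle of $G$ and return to its starting $H$-coordinate, exactly removing the obstruction that forced the passage to $G'$. The two factor-$2$ savings should come from (i) a free-move ``cut the corner'' shortcut as in Lemma~\ref{freeMove}, and (ii) an amortized, frugality-style count (cf.\ Proposition~\ref{tightBoxProd}) in which moving $k$ pebbles onto $e_g\times H$ costs $\frac98\pi(H)^2$ plus cheap increments rather than $k\cdot\frac98\pi(H)^2$, so that the per-block $|G|$ slack is avoided and a single factor of $\pi(G)$ survives.

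The main obstacle is the sharpness of the constant, and I expect this is where the plan breaks as literally stated. The ratio $\pi(C_{4k+2})/\pi(C_{2k+1})^2$ converges to $\frac98$ but \emph{exceeds} it for infinitely many $k$, the overshoot being largest at $C_5$. Indeed $K_2\times C_5=C_{10}$, and by Fact~\ref{trivial}(\ref{LowerDiam}) $\pi(C_{10})\ge 2^{\diam(C_{10})}=2^5=32$, whereas $\frac98\pi(C_5)^2=\frac98\cdot25=28.125$; similarly $C_9,C_{13},\dots$ violate the bound. Hence no argument can prove $\frac{9}{16}\pi(G)\pi(H)^2$ for every admissible $H$, and the realistic target is a corrected statement — either an additive slack term (in the spirit of the ``$+2$'' in the strong-product conjecture) absorbing the short-cycle overshoot, or an asymptotic version valid once $\pi(H)$ is large. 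In either case the technical heart is bounding the odd-cycle parity-correction overhead, which naively scales like $2^{\text{odd girth}(H)}$, tightly enough not to spoil the leading constant.
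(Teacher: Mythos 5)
There is nothing in the paper to compare your attempt against: the statement is posed as a conjecture, and the paper's only supporting evidence is the limiting computation $\pi(K_2)\pi(C_{2k+1})^2 \sim \frac{16}{9}\pi(C_{4k+2})$, which addresses asymptotic tightness, not truth. Read as a proof, your proposal is incomplete: the double-cover base case, the odd-closed-walk parity correction, and the two hoped-for factor-$2$ savings are all left as sketches, and the projection step has a genuine gap --- a configuration on $K_2\times H$ (the bipartite double cover of $H$) does push forward to $H$, but an $r_H$-solution downstairs need not lift, since two pebbles on the same vertex of $H$ may lie in opposite fibers and then cannot be paired for a move upstairs; your own estimate that the parity correction costs on the order of $2^{\ell}$ for a detour of length $\ell$ already destroys any fixed leading constant. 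None of this matters, though, because your closing observation is correct and decisive: the conjecture as literally stated is \emph{false}, and this can be checked using only facts the paper itself cites.

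Concretely, take $G=K_2$ and $H=C_5$. The paper notes $K_2\times C_{2k+1}=C_{4k+2}$, so $K_2\times C_5=C_{10}$, and by the cycle formula of \cite{cycle} (or just Fact~\ref{trivial}(\ref{LowerDiam})) $\pi(C_{10})=2^5=32$, while $\frac{9}{16}\pi(K_2)\pi(C_5)^2=\frac{9}{8}\cdot 25=28.125$. Your parity analysis is also right in general: for even $k$ one has $2^{k+1}\equiv 2 \pmod 3$, hence $\pi(C_{2k+1})=2\left\lfloor 2^{k+1}/3\right\rfloor+1=(2^{k+2}-1)/3$, and then $\frac{9}{16}\cdot 2\cdot\pi(C_{2k+1})^2=(2^{k+2}-1)^2/8 < 2^{2k+4}/8 = 2^{2k+1}=\pi(C_{4k+2})$, so $C_5, C_9, C_{13},\dots$ all violate the bound; for odd $k$, $\pi(C_{2k+1})=(2^{k+2}+1)/3$ and the bound holds, which is exactly the oscillation around the limit $\frac{16}{9}$ that the paper's asymptotic remark conceals. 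So the ``gap'' here is in the conjecture, not in your refutation: the statement needs repair, e.g.\ an additive slack term (in the spirit of the ``$+2$'' in the paper's strong-product conjecture) or an asymptotic formulation requiring $\pi(H)$ large, as you suggest. Any positive result along your constructive lines should be aimed at such a corrected statement, and the technical heart is indeed bounding the fiber-correction overhead well enough to preserve the constant $\frac{9}{16}$ in the leading term.
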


If this conjecture is true, it would be asymptotically tight. 
It is known that $\pi(C_{2k}) = 2^k $ and $\pi(C_{2k+1}) = 2 \lfloor 2^{k+1}/3 \rfloor+1$ \cite{cycle}. 
Note that $K_2 \times C_{2k+1} = C_{4k+2}$, so $\pi(K_2) \pi(C_{2k+1})^2 = 2 \left( 2\lfloor 2^{k+1}/3 \rfloor + 1 \right)^2 \sim \frac{16}{9}~2^{2k+1} = \frac{16}{9} \pi(C_{4k+2})$ as $k\to \infty$. 
Further, this example is persuasive for showing that the $\pi(H)^2$ component, as opposed to just $\pi(H)$, is likely necessary in a Graham-type bound for the cross product.

\section{Pebbling on Coronas}\label{corona}

For an idea of how the pebbles move in these graphs, see Figure~\ref{coronaSketch}. 
The following two results are straightforward from the structure of $G\bowtie H$.

\begin{figure}[htb]
\begin{center}
\includegraphics[scale=1]{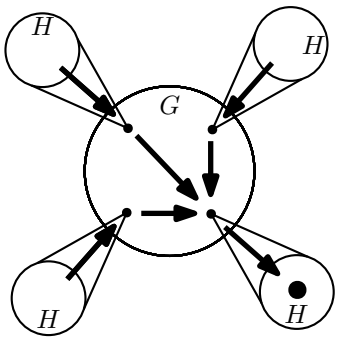}
\end{center}
\caption{$G\bowtie H$ with pebble movements}\label{coronaSketch}
\end{figure}

\begin{lemma}\label{2pig}
Let $G$ be a connected graph and $H$ be any graph.
If there are $2\pi(G)$ pebbles on the vertices in $G$ then there is an $r$-solution in $G\bowtie H$ for any $r$.
Furthermore, if $r$ is in a copy of $H$ and $\pi(G)+1$ pebbles are on the vertices in $G$ with at least one pebble on the vertex in $G$ adjacent to $r$, then a pebble can be moved to $r$.
\end{lemma}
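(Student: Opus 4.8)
The plan is to exploit the defining feature of the corona. The copy of $H$ glued to a vertex $g\in G$, namely $g\times H$, is joined to the rest of $G\bowtie H$ only through $g$, and $g$ is adjacent to \emph{every} vertex of $g\times H$. Consequently, reaching a root $r=(g,h)$ lying in a copy of $H$ amounts to getting two pebbles onto the single vertex $g$ and then making one further move into $g\times H$, which may land on any vertex of that copy, in particular on $r$. A root $r\in V(G)$ needs no such reduction, since every edge of $G$ is present in $G\bowtie H$, so the moves of $G$ are all legal inside $G\bowtie H$.

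For the first assertion I would split on the location of $r$. If $r\in V(G)$, then the $2\pi(G)\ge\pi(G)\ge\pi(G,r)$ pebbles sitting on the embedded copy of $G$ already solve $r$ by pebbling entirely within $G$. If instead $r=(g,h)$, the task reduces to delivering two pebbles to $g$ using the $2\pi(G)$ pebbles on $G$, and here the key step is a cheap first delivery. I claim that any \emph{minimal} $g$-solvable subconfiguration $D$ of $C$ satisfies $|D|\le\pi(G,g)\le\pi(G)$: deleting any one pebble of $D$ leaves a $g$-unsolvable configuration, whose size is at most $\pi(G,g)-1$ by the definition of $\pi(G,g)$, so $|D|\le\pi(G,g)$. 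Choosing such a $D$, I would run its solution to move one pebble onto $g$ and leave it parked there, consuming at most $\pi(G)$ pebbles. The untouched pebbles number at least $2\pi(G)-\pi(G)=\pi(G)\ge\pi(G,g)$, so they deliver a second pebble to $g$; the parked pebble together with this one gives the two pebbles at $g$ needed to reach $r$.

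For the \emph{furthermore} clause, $r=(g,h)$ again and $g$ is the stated neighbor of $r$ in $G$ carrying at least one pebble. I would simply reserve one pebble lying on $g$, setting it aside so that it remains fixed. The remaining $\pi(G)\ge\pi(G,g)$ pebbles on $G$ then solve $g$, placing a further pebble there; together with the reserved pebble this yields two pebbles at $g$, and one final move sends a pebble across the edge $g\sim r$ to $r$.

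The only genuinely delicate point is the cost bound in the first assertion: a naive attempt to ``halve'' the $2\pi(G)$ pebbles into two $g$-solving copies fails, because isolated pebbles (one at each vertex) can squander up to $|G|$ of the budget, and $2\pi(G)-|G|$ need not survive as a usable paired configuration of size $2\pi(G,g)$. The observation that a minimal solvable configuration has size at most $\pi(G,g)$ — equivalently, that one can always solve $g$ once at cost at most $\pi(G)$ while retaining $\pi(G)$ pebbles for a second solve — is exactly what keeps the bound at $2\pi(G)$ rather than something larger, and everything else follows directly from the adjacency structure of the corona.
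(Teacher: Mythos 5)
Your proof is correct, and it matches the intended argument: the paper offers no written proof of Lemma~\ref{2pig} (it declares it ``straightforward from the structure of $G\bowtie H$''), and your route is the natural filling-in. In particular, your key step---that a minimally $g$-solvable subconfiguration has size at most $\pi(G,g)$, so one pebble can be delivered to $g$ at cost at most $\pi(G)$ while $\pi(G)$ untouched pebbles remain for a second delivery (in effect $\pi_2(G)\le 2\pi(G)$)---is exactly the standard device, and indeed the same minimal-solvable-subconfiguration idea the authors themselves deploy in the proof of Lemma~\ref{freeMove}.
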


For a vertex $v\in V(G\bowtie H)-V(G)$ denote by $H^v$ the copy of $H$ in $G\bowtie H$ that contains $v$.

\begin{lemma}\label{42pebbles}
Let $G$ be a connected graph and $H$ be any graph.
Suppose that $r\in V(G\bowtie H)-V(G)$. 
If there are $4$ pebbles at a vertex in $H^r$ or there are $2$ pebbles at two distinct vertices in $H^r$, then there is an $r$-solution in $G\bowtie H$.
\end{lemma}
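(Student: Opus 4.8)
The plan is to exploit the fact that the copy $H^r$ sits inside $G\bowtie H$ together with a single ``hub'' vertex of $G$ that is adjacent to all of it. Write $r=(g,h)$ with $g\in V(G)$ and $h\in V(H)$, so that $H^r=g\times H$. By the definition of the corona we have $g\sim_{G\bowtie H}(g,h')$ for every $h'\in H$; hence $g$ is adjacent to every vertex of $H^r$, and in particular each vertex of $H^r$ lies within distance $2$ of $r$ along the path $(g,h')\to g\to r$. Every pebbling move in this lemma will be routed through this hub $g$, so I will not need to use any further structure of $G$ or of $H$.

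First I would dispose of the trivial situation in which the prescribed pebbles already lie on $r$: if $r$ itself carries the $4$ pebbles (or is one of the two vertices carrying $2$ pebbles), then a pebble is already present at $r$ and there is nothing to prove. So I assume the pebbled vertices are distinct from $r$. For the first hypothesis, suppose $v\in V(H^r)\setminus\{r\}$ carries $4$ pebbles. Since $v\sim_{G\bowtie H}g$, two pebbling moves at $v$ place $2$ pebbles on $g$, and since $g\sim_{G\bowtie H}r$, one further move places a pebble on $r$. For the second hypothesis, suppose distinct $u,v\in V(H^r)\setminus\{r\}$ each carry $2$ pebbles; as both are adjacent to $g$, one move from each accumulates $2$ pebbles on $g$, and a final move sends a pebble to $r$. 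In either case this yields an $r$-solution.

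There is essentially no obstacle here: the content is purely structural, resting on the observation that the hub $g$ dominates $H^r$ and therefore brings $r$ to within distance $2$ of everything in its copy. The only point requiring a moment's care is the bookkeeping of the degenerate cases in which a pebbled vertex coincides with $r$, which I handle at the very start so that the two main cases reduce to the elementary ``$2^{2}=4$ pebbles reach distance $2$'' computation.
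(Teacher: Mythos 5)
Your proof is correct and matches the paper's reasoning: the paper states this lemma without proof, calling it ``straightforward from the structure of $G\bowtie H$,'' and the intended structure is precisely your observation that the hub vertex $g\in V(G)$ dominates $H^r$, so every vertex of $H^r$ is within distance $2$ of $r$ via $g$. Your case analysis (including the degenerate case where a pebbled vertex is $r$ itself) supplies exactly the omitted routine details.
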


Lemma~\ref{42pebbles} implies an $r$-unsolvable configuration has at most $|H|+1$ pebbles in $H^r$. 

\begin{lemma}\label{2k}
Let $G$ be a connected graph, $H$ be any graph, and consider the graph $G\bowtie H$. 
If there are $|H|+2k-1$ pebbles on the vertices in one copy of $H$ in $G\bowtie H$, say $H^*$, then $k$ pebbles can be moved to the vertex in $G$ that is adjacent to all vertices in $H^*$. 
\end{lemma}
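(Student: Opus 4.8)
The plan is to route pebbles from each vertex of $H^*$ directly onto the single vertex $g \in V(G)$ that, by the definition of the corona, is adjacent to every vertex of $H^*$. First I would fix this vertex $g$ and, for each vertex $v \in V(H^*)$, let $c_v$ denote the number of pebbles initially placed on $v$, so that $\sum_{v \in V(H^*)} c_v = |H| + 2k - 1$. Since each such $v$ is adjacent to $g$, every pair of pebbles on $v$ yields one pebbling move depositing a pebble on $g$; hence from $v$ alone we can place $\lfloor c_v/2 \rfloor$ pebbles on $g$, and these moves from distinct vertices are independent and do not interfere with one another. There is no need to move any pebbles within $H^*$ itself.

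It then remains to show that $\sum_{v} \lfloor c_v/2 \rfloor \ge k$. I would write $\lfloor c_v/2 \rfloor = (c_v - \epsilon_v)/2$, where $\epsilon_v \in \{0,1\}$ records the parity of $c_v$, so that
\[ \sum_{v} \left\lfloor \frac{c_v}{2} \right\rfloor = \frac{1}{2}\left( \sum_v c_v - \sum_v \epsilon_v \right) = \frac{1}{2}\left( |H| + 2k - 1 - \sum_v \epsilon_v \right). \]
Since $H^*$ has exactly $|H|$ vertices, we crudely have $\sum_v \epsilon_v \le |H|$, which already gives $\sum_v \lfloor c_v/2 \rfloor \ge k - \tfrac{1}{2}$; as the left-hand side is an integer, this forces $\sum_v \lfloor c_v/2 \rfloor \ge k$, completing the argument.

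This lemma presents no real obstacle: the only point requiring slight care is the parity bookkeeping, and even this can be sharpened if desired by noting that the number of odd $c_v$ must have the same parity as $\sum_v c_v = |H| + 2k - 1$, so at most $|H| - 1$ of the $\epsilon_v$ can equal $1$; this yields $\sum_v \lfloor c_v/2 \rfloor \ge k$ directly, without appealing to integrality. Either route suffices, and the essential structural fact being exploited is simply that every vertex of the attached copy $H^*$ shares $g$ as a common neighbor, so the pebbles can all be funneled into $g$ in a single layer of moves.
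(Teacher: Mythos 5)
Your proof is correct and takes essentially the same route as the paper: both arguments funnel $\bigl\lfloor c_v/2 \bigr\rfloor$ pebbles from each vertex of $H^*$ straight to the common neighbor $g$ and bound the loss from odd parities, your subtraction of $\epsilon_v$ being exactly the paper's ``remove one pebble from each odd vertex.'' If anything, your parity refinement (at most $|H|-1$ of the $c_v$ can be odd, since the number of odd counts must have opposite parity to $|H|$) is slightly more careful than the paper, whose proof as written removes up to $|H|$ pebbles yet claims at least $2k$ remain, a gap that your observation (or your integrality shortcut) closes.
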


\begin{proof}
Given a configuration $C$ of $|H|+2k-1$ pebbles on $H^*$, at most $|H|$ (``odd'') vertices have an odd number of pebbles on them.
By removing a pebble from each odd vertex we obtain a configuration $C'$ of at least $2k$ pebbles, for which each vertex is even.
By pairing up pebbles on each vertex, at least $k$ pebbling moves can be made.
\end{proof}


\begin{theorem}
\label{coronaProdH}
Let $G$ be a connected graph and $H$ be any graph.
Then $\pi(G\bowtie H)\leq |G||H| + 4\pi(G)$.
\end{theorem}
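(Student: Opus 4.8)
The plan is to prove $\pi(G\bowtie H) \le |G||H| + 4\pi(G)$ by analyzing two cases for the location of the root $r$, and in each case showing that a configuration of size $|G||H| + 4\pi(G)$ always admits an $r$-solution. Since the corona $G \bowtie H$ consists of one copy of $G$ together with $|G|$ pendant copies of $H$ (one attached to each vertex of $G$), the natural strategy is to route pebbles from the outer copies of $H$ inward to their attachment vertices in $G$, accumulate enough pebbles on $V(G)$, and then finish using the pebbling number of $G$ itself. The Lemmas already proved give exactly the exchange rates needed for this routing.

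First I would handle the case $r \in V(G)$. By Lemma~\ref{2k}, each copy $H^v$ of $H$ can convert its pebbles into pebbles delivered to the adjacent vertex of $G$: specifically $|H| + 2k - 1$ pebbles in one copy yield $k$ pebbles at the attachment vertex. The idea is to count, over all $|G|$ copies of $H$ together with the pebbles already sitting on $V(G)$, how many pebbles can be pushed onto $V(G)$. Summing the bound from Lemma~\ref{2k} across the $|G|$ copies, the ``$-1$'' and ``$|H|$'' overhead terms total roughly $|G||H|$, so after subtracting this overhead from $|G||H| + 4\pi(G)$ we are left with enough surplus to guarantee that at least $2\pi(G)$ pebbles reach $V(G)$ (counting those originally there). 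Then Lemma~\ref{2pig} closes the case, since $2\pi(G)$ pebbles on $V(G)$ give an $r$-solution for any $r$, in particular for $r \in V(G)$.

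Next I would handle $r \in V(G\bowtie H) - V(G)$, so $r$ lies in some copy $H^r$ attached to a vertex $g_r \in V(G)$. Here Lemma~\ref{42pebbles} tells us that if $H^r$ itself held $4$ pebbles at one vertex or $2$ at each of two vertices we would already be done; so the hard configurations keep $H^r$ sparse. The plan is to treat $H^r$ as contributing essentially nothing and to drive pebbles from the \emph{other} copies onto $V(G)$ as before, then use the second part of Lemma~\ref{2pig}: if $\pi(G)+1$ pebbles reach $V(G)$ with at least one on $g_r$ (the neighbor of $r$), a pebble reaches $r$. The bookkeeping must ensure that after discounting the at-most-$|H|+1$ pebbles stranded in $H^r$, the remaining pebbles still convert (via Lemma~\ref{2k}) into enough pebbles on $V(G)$, landing at least one on $g_r$, to invoke Lemma~\ref{2pig}.

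\textbf{The main obstacle} I anticipate is the accounting in the second case, where $r$ lies in a copy of $H$. One must be careful that the pebbles trapped in $H^r$ (up to $|H|+1$ by the remark after Lemma~\ref{42pebbles}) do not eat into the budget needed to reach $g_r$ specifically, and that the conversion losses from Lemma~\ref{2k} across all copies, plus the cost of moving a pebble the ``last step'' onto $r$, are jointly covered by the $4\pi(G)$ slack. Tracking which pebbles can be guaranteed to arrive \emph{on} $g_r$ versus merely somewhere on $V(G)$ is the delicate point, and it is likely where the factor $4$ (rather than $2$) on $\pi(G)$ is consumed. The first case should be comparatively routine once the summation in Lemma~\ref{2k} is set up correctly.
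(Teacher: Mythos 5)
Your plan follows essentially the same route as the paper's proof: the same case split on the location of $r$, the same conversion of pebbles in each copy of $H$ into pebbles on $V(G)$ at the exchange rate of Lemma~\ref{2k}, the bound of at most $|H|+1$ pebbles stranded in $H^r$ from Lemma~\ref{42pebbles}, and the finish via Lemma~\ref{2pig}; the paper merely organizes the same accounting contrapositively, bounding the size of a maximum $r$-unsolvable configuration instead of solving directly. The delicate point you flag in the second case in fact dissolves: since $k_i \ge (p_i - |H|)/2$ for every copy, the $4\pi(G)$ slack delivers at least $2\pi(G)$ pebbles to $V(G)$ even after discarding $H^r$, so the first part of Lemma~\ref{2pig} (which covers any root, including $r \in H^r$) finishes without ever needing to land a pebble on $g_r$ or invoke the lemma's second part.
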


\begin{proof}
Let $r$ be the root and suppose that $C$ is an $r$-unsolvable configuration of maximum size.
Write $g=|G|$, $h=|H|$, and $H_1,\ldots,H_g$ for the copies of $H$ in $G\bowtie H$, corresponding to the vertices $v_1,\ldots,v_g$ in $V(G)$.
For a subgraph $F\subseteq G\bowtie H$ let $C_F$ be the restriction of $C$ to $F$.

Suppose that $r\in V(G)$.
In this case we know that $|C_G|=0$ because, otherwise, any pebble on a vertex $v_i\in V(G)$ can be moved back to two pebbles in $H_i$, preserving $r$-unsolvability but contradicting maximality.
Similarly, each $|C_{H_i}|\ge h$ since, otherwise, we could add a pebble to some vertex in $H_i$ that doesn't yet have one.
Let $k_i\ge 0$ be such that $|C_{H_i}|=h+2k_i-1+\epsilon_i$, where $\epsilon_i\in\{0,1\}$.
Then, by Lemma \ref{2k}, the total number of pebbles that all the copies of $H$ contribute to $G$ is at least $\sum_{i=1}^g k_i < \pi(G)$.
Hence $|C| = \sum_{i=1}^g |C_{H_i}| \le \sum_{i=1}^g (h+2k_i) \le gh + 2(\pi(G)-1)$.

Now suppose, without loss of generality, that $r\in V(H_1)$.
As above, $|C_G|\le 1$ (there could be a pebble on $v_1$).
Also, $|C_{H_i}|\ge h$ for $i>1$, while Lemma \ref{42pebbles} implies that $|C_{H_1}|\le h+1$.
Defining $k_i$ as above for $i>1$, Lemmas~\ref{2pig} and \ref{2k} show that $\sum_{i=2}^g k_i < \pi(G)$ when $|C_G|=1$ and that $\sum_{i=2}^g k_i < 2\pi(G)$ when $|C_G|=0$.
Therefore, if $|C_G|=0$ then $|C| = |C_{H_1}| + \sum_{i=2}^g |C_{H_i}| \le (h+1) + \sum_{i=2}^g (h+2k_i) \le gh+1 + 2(2\pi(G)-1) = gh + 4\pi(G) - 1$, and if 
$|C_G|=1$ then $|C|=|C_G|+|C_{H_1}|+\sum_{i=2}^g |C_{H_i}|\leq 1+(h+1)+\sum_{i=2}^g(h+2k_i)\leq gh+2+2(\pi(G)-1=gh+2\pi(G)$.

Therefore, if $|C| \ge gh + 4\pi(G)$, $C$ is $r$-solvable.
\end{proof}

A well-known example of a corona is the {\it sun} $S_n=K_m\bowtie K_1$, where $n=2m$.
Since $S_n$ is a split graph, an application of the main result of \cite{AGH2014split} yields $\pi(S_n)=3m+2$ for $m \ge 2$, while Theorem~\ref{coronaProdH} only gives the upper bound $5m$.
This shows that the bound can be weak.
However, the bound can also be fairly sharp.
Indeed, one can see that  for $m \ge 2$, $\pi(K_m\bowtie K_t)=mt+2m+2$, whereas this theorem gives the upper bound $mt+4m$, which is asymptotically sharp in $t$.
Also, for any non-complete connected graph $H$, $K_1\bowtie H$ has diameter two and so $\pi(K_1\bowtie H)=|H|+1$ (it is Class 0 by \cite{CHH1997diam2}), while this theorem gives the upper bound $|H|+4$.


\section{Notes}
In Section \ref{strong}, we introduced the new function $\phi$ that proved useful in deriving Proposition~\ref{strongProd}.
It would therefore be of use to study $\phi$ for various graphs and graph classes in order to sharpen this bound and investigate its tightness.
For instance, the bound is sharp for $C_5$, $\pi(C_5)=5$ and $\phi(C_5)=3$, but denser graphs have much looser inequalities, i.e., $\phi(K_n)=1$.  The case when the diameter is 2 also presents an interesting question; for while it may be tempting to say $\phi(G)=2$ when $\diam(G)=2$, a quick check has that the Petersen Graph, $P$ has $\phi(P) = 3$.

On the subject of frugal graphs, and based on the results of \cite{AGH2015path,AGH2017trees}, it would be worth exploring whether or not all chordal graphs are frugal.

Finally, the truth of Graham's conjecture would imply that the set of Class~0 graphs is closed under cartesian products.
This should be a robust direction to pursue, especially if we also add the frugal property, although it has been suggested in \cite{hurlWeight} that the square of the Lemke graph might be a counterexample.



\providecommand{\bysame}{\leavevmode\hbox to3em{\hrulefill}\thinspace}
\providecommand{\MR}{\relax\ifhmode\unskip\space\fi MR }
\providecommand{\MRhref}[2]{%
  \href{http://www.ams.org/mathscinet-getitem?mr=#1}{#2}
}
\providecommand{\href}[2]{#2}

\end{document}